
\documentclass[11pt]{amsart}

\usepackage{srcltx}
\usepackage{amsmath, amssymb}
\usepackage{amscd}
\usepackage{verbatim}
\usepackage[arrow,matrix]{xy}
\usepackage{ifpdf}
\ifpdf
\usepackage{hyperref}           
\else
\usepackage[hypertex]{hyperref} 
\fi


%
%
%
\newtheorem{definition}{Definition}[section]
\newtheorem{theorem}[definition]{Theorem}
\newtheorem{lemma}[definition]{Lemma}
\newtheorem{corollary}[definition]{Corollary}
\newtheorem{proposition}[definition]{Proposition}
\theoremstyle{definition}
\newtheorem{remark}[definition]{Remark}

\newtheorem{example}[definition]{Example}

\newcommand\CC{\mathbb{C}}
\newcommand\NN{\mathbb{N}}
\newcommand\RR{\mathbb{R}}
\newcommand\TT{\mathbb{T}}

\newcommand\cS{\mathcal{S}}

\newcommand\spann{\mbox{\rm span}\,}


\newcommand\style{\mathcal }          

\newcommand\bh{\style B(\style H)}    

\newcommand{\B}{\style{B}}
\newcommand{\M}{M}
\renewcommand{\H}{\style{H}}
\newcommand{\K}{\style K}

\newcommand{\W}{\style{W}}




\newcommand\choquet{\partial_{\rm C}}





\newcommand\osp{{\style P}}
\newcommand\oss{{\style S}}
\newcommand\ost{{\style T}}













\newcommand\cstar{{\rm C}^*}                              
\newcommand\cstare{{\rm C}_{\rm e}^*}              





\newcommand\conv{{\rm Conv} }                          




\renewcommand\oss[1]{{\style O{}\style S}(#1)}
\newcommand\osss{{\style S}}
\renewcommand\W{\mathbb W}

\begin{document}

\title[C$^*$-Envelopes of Jordan Operator Systems]
 {C$^*$-Envelopes of Jordan Operator Systems}

\author[Mart\'\i n Argerami]{Mart\'\i n~Argerami}
\address{Department of Mathematics and Statistics\\
University of Regina\\
Regina, Saskatchewan S4S 0A2\\
Canada}
\email{argerami@math.uregina.ca}

\author[Douglas Farenick]{Douglas Farenick}
\address{%
Department of Mathematics and Statistics\\
University of Regina\\
Regina, Saskatchewan S4S 0A2\\
Canada}
\email{douglas.farenick@uregina.ca}

\thanks{This work is supported in part by the NSERC Discovery Grant program}

\subjclass{Primary 46L07; Secondary 47A12, 47C10}

\keywords{operator system, boundary representation, C$^*$-envelope, \v Silov ideal, Jordan operator, numerical range}

\begin{abstract}
We determine the boundary representations and the 
C$^*$-envelope of operator systems of the form $\mbox{\rm span}\{1,T,T^*\}$, where $T$ is a Jordan operator. 
\end{abstract}

\maketitle

\section*{Introduction}

Noncommutative Choquet theory concerns the study of boundary representations and 
C$^*$-envelopes of operator spaces, operator systems, and nonselfadjoint operator algebras 
(we refer the reader to the monographs \cite{Blecher--LeMerdy-book,Paulsen-book} and the survey
article \cite{blecher2007}).
In this paper we consider operator systems generated by a single bounded linear Hilbert space operator, which is a 
fundamental case of interest in operator theory and which continues a line of investigation that 
originates with W.~Arveson in the early 1970s \cite{arveson1972}.

The \emph{operator system generated by a bounded linear operator $T$} acting on a complex Hilbert space $\H$
is the subspace $\oss{T}\subset\B(\H)$ defined by
\[
\oss{T}\,=\,\spann\,\{1,T,T^*\}\,.
\]
Any unital representation $\rho:\cstar(T)\rightarrow\B(\H_\rho)$ of the C$^*$-algebra $\cstar(T)$ generated by the operator system $\oss{T}$ induces a unital completely positive (ucp) linear map 
$\varphi:\oss{T}\rightarrow\B(\H_\rho)$ by restricting the domain of $\rho$ to $\oss{T}$---that is, $\varphi=\rho_{\vert\oss{T}}$. Hence, $\rho$ is just one of potentially many ucp extensions of the ucp map
$\varphi:\oss{T}\rightarrow\B(\H_\rho)$ to $\cstar(T)$. 

Arveson, following the idea of the Choquet boundary in function theory, recognised the special role of those irreducible $\rho$ such that $\rho|_{\oss{T}}$ has a unique ucp extension to $\cstar(T)$.
So, a unital representation $\rho:\cstar(T)\rightarrow\B(\H_\rho)$ is a \emph{boundary representation for $\oss{T}$} if
\begin{enumerate}
\item $\rho$ is irreducible and
\item $\rho_{\vert\oss{T}}$ has a unique ucp extension to $\cstar(T)$ (namely, $\rho$ itself).
\end{enumerate}
A theorem of Arveson \cite{arveson2008} implies that
sufficiently many boundary representations exist for $\oss{T}$ in the sense that if $X=[X_{ij}]_{i,j}$ is any $p\times p$ matrix with entries $X_{ij}\in\oss{T}$, then
\[
\|X\|\,=\,\sup\left\{\| [\rho(X_{ij})]_{i,j}\|\,:\,\rho\mbox{ is a boundary representation for }\oss{T}\right\}.
\]

The ideal $\mathfrak S_{\oss{T}}$ of $\cstar(T)$ consisting of all $X\in\cstar(T)$ for which $\rho(X)=0$ for every boundary representation $\rho$ of $\oss{T}$
is called the \emph{\v Silov ideal for $\oss{T}$}. The \v Silov ideal $\mathfrak S_{\oss{T}}$
is the biggest ideal $\K$ of $\cstar(T)$ for which the canonical quotient map
$q_{\K}:\cstar(T)\rightarrow\cstar(T)/\K$ is completely isometric on $\oss{T}$. The quotient C$^*$-algebra $\cstare(\oss{T}):=\cstar(T)/\mathfrak S_{\oss{T}}$, together with
the unital completely isometric embedding of $\oss{T}$ into $\cstare(\oss{T})$ induced by the quotient homomorphism, is the C$^*$-envelope for $\oss{T}$. 
If $\oss{T}$ has a trivial \v Silov ideal, then necessarily $\cstare(\oss{T})=\cstar(\oss{T})$ and, therefore, the operator system $\oss{T}$ is said to be \emph{reduced} \cite{arveson2010}.

Although there is an extensive literature for noncommutative Choquet theory for arbitrary abstract operator systems, there remains a need for tractable interesting examples, as the 
issue of determining the boundary representations and C$^*$-envelope of a given operator system is generally quite difficult. In the case of operator systems of the form $\oss{T}$, 
there are classical function-theoretic results for normal (and subnormal) operators, and we recently considered the case of operator systems generated by
an irreducible periodic weighted unilateral shift \cite{argerami--farenick2013a}.
Motivated by the desire to develop examples in finite-dimensional Hilbert space---where the complexity of the issue is already substantial---to accompany the abstract results of \cite{arveson2010},
the purpose of this paper is to determine the boundary representations and the 
C$^*$-envelope of operator systems generated by a Jordan operator. 

The choice of Jordan operators is mostly due to the fact that there is information available about their matricial ranges. Most techniques in this paper can be extended to any operators for which their matricial ranges are known. 

A completely positive linear bijection $\varphi:\osss\rightarrow\ost$ of operator systems
is a \emph{complete order isomorphism} if $\varphi^{-1}$ is completely positive.
We
will use the following well-known elementary lemma often and without mention:
if $\varphi:\osss\to\ost$ is a completely contractive bijective linear map of operator systems such that $\varphi^{-1}$ is completely contractive, then $\varphi$ is a complete isometry. 

The dimension of $\oss{T}$ is 1,2, or 3, depending on the choice of $T$. The cases of dimensions 1 and 2 are easily determined: up to complete order isomorphism
there is exactly one operator system of dimension 1 (namely, $\CC$) and exactly one operator system of dimension 2 (namely, $\CC\oplus\CC$; see Proposition \ref{proposition: selfadjoint case}). 
However, the situation is very different for dimension 3, and in this case we are far from classifying such operator systems up to complete order isomorphism---even for operator systems acting
on finite-dimensional Hilbert spaces.

The main results on Jordan operator systems are contained in Section \S\ref{jordan op sys} and make use of the matricial range of an operator. Therefore, we begin with a preliminary section on the Choquet boundary and the
matricial range, and we point out the role of the matricial range in determining boundary representations.

\section{Choquet Boundary, Matricial Ranges, and Direct Sums of Matrix Algebras}

The set $\choquet\oss{T}$ of all boundary representations for $\oss{T}$ is called the \emph{Choquet boundary} for $\oss{T}$.  The C$^*$-envelope $\cstare(\oss{T})$ of $\oss{T}$, defined in the introduction as a quotient algebra, arises 
in a different guise, which is useful for applications:

\begin{theorem}[Arveson \cite{arveson2008}]\label{theorem: product of boundary} 
If $T\in\B(\H)$, then 
\[
\cstare(\oss{T})=\left(\prod_{\rho\in\choquet\oss{T}}\rho\right)(\cstar(\oss{T})).
\]
In particular, the map $\prod_{\rho\in\choquet\oss{T}}\rho$ is completely isometric on $\oss{T}$.
\end{theorem}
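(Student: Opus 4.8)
The plan is to realise $\prod_{\rho\in\choquet\oss{T}}\rho$ as a single unital $*$-representation of $\cstar(\oss{T})=\cstar(T)$ and to identify its kernel with the \v Silov ideal $\mathfrak S_{\oss{T}}$, reducing the theorem to the first isomorphism theorem for C$^*$-algebras. First I would set $\pi=\prod_{\rho\in\choquet\oss{T}}\rho$, acting as a direct sum on $\bigoplus_{\rho\in\choquet\oss{T}}\H_\rho$. Each $\rho$ is a unital $*$-representation, and the Choquet boundary is nonempty by the existence result quoted above, so $\pi$ is a unital $*$-representation; since $\cstar(\oss{T})$ is generated as a C$^*$-algebra by $\oss{T}$, the image is a C$^*$-algebra generated by $\pi(\oss{T})$, i.e.\ $\pi(\cstar(\oss{T}))=\cstar\big(\pi(\oss{T})\big)$.

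The decisive step is the computation of the kernel. For a direct-sum representation one has $\ker\pi=\bigcap_{\rho\in\choquet\oss{T}}\ker\rho=\{X\in\cstar(\oss{T}):\rho(X)=0\ \text{for all}\ \rho\in\choquet\oss{T}\}$, and this is precisely the definition of $\mathfrak S_{\oss{T}}$. The first isomorphism theorem then produces a $*$-isomorphism $\tilde\pi:\cstar(\oss{T})/\mathfrak S_{\oss{T}}\to\pi(\cstar(\oss{T}))$ satisfying $\pi=\tilde\pi\circ q_{\mathfrak S_{\oss{T}}}$, where $q_{\mathfrak S_{\oss{T}}}$ is the quotient homomorphism. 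As $\cstare(\oss{T})$ is by definition $\cstar(\oss{T})/\mathfrak S_{\oss{T}}$, this identifies $\cstare(\oss{T})$ with $\big(\prod_{\rho\in\choquet\oss{T}}\rho\big)(\cstar(\oss{T}))$, which is the asserted equality.

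For the final (``in particular'') assertion I would compute matrix norms coordinatewise: for $X=[X_{ij}]\in M_p(\cstar(\oss{T}))$ one has $\|\pi^{(p)}(X)\|=\sup_{\rho\in\choquet\oss{T}}\|[\rho(X_{ij})]\|$, and when the entries lie in $\oss{T}$ the matrix-norm formula recorded in the Introduction (Arveson \cite{arveson2008}) says this supremum equals $\|X\|$. Hence $\pi$ is completely isometric on $\oss{T}$, and consequently $\tilde\pi$ carries the canonical copy $q_{\mathfrak S_{\oss{T}}}(\oss{T})$ onto $\pi(\oss{T})$ by a complete order isomorphism, so the identification is one of C$^*$-envelopes together with their operator-system embeddings. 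I do not anticipate an analytic obstacle here, since both the existence of sufficiently many boundary representations and the description of $\mathfrak S_{\oss{T}}$ as the largest ideal whose quotient map is completely isometric on $\oss{T}$ have been quoted; the only point requiring care is the bookkeeping that confirms the abstract $*$-isomorphism $\tilde\pi$ is compatible with the two embeddings of $\oss{T}$, so that the stated equality is meaningful at the level of C$^*$-envelopes and not merely as an isomorphism of C$^*$-algebras.
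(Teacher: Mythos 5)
Your proof is correct, but note that the paper itself offers no proof of this statement at all: it is quoted as a theorem of Arveson \cite{arveson2008}, and the only ingredients the paper records are the definition of the \v Silov ideal $\mathfrak S_{\oss{T}}$ as the set of elements annihilated by every boundary representation, the definition $\cstare(\oss{T})=\cstar(T)/\mathfrak S_{\oss{T}}$, and the matrix-norm formula $\|X\|=\sup_\rho\|[\rho(X_{ij})]\|$ for entries in $\oss{T}$. Your argument is exactly the standard deduction of the stated form of the theorem from those quoted facts: with $\pi=\prod_{\rho\in\choquet\oss{T}}\rho$ one has $\ker\pi=\bigcap_\rho\ker\rho=\mathfrak S_{\oss{T}}$ literally by definition, the first isomorphism theorem gives $\tilde\pi:\cstar(T)/\mathfrak S_{\oss{T}}\to\pi(\cstar(T))$, and the coordinatewise identity $\|\pi^{(p)}([X_{ij}])\|=\sup_\rho\|[\rho(X_{ij})]\|$ together with Arveson's formula gives complete isometry of $\pi$ on $\oss{T}$. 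Two small merits of your write-up are worth keeping in mind: first, you correctly isolate where the genuine depth lies (the existence of sufficiently many boundary representations, i.e.\ the quoted sup formula --- everything else is bookkeeping); second, your final compatibility check actually \emph{derives} that $q_{\mathfrak S_{\oss{T}}}$ is completely isometric on $\oss{T}$ (since $\pi=\tilde\pi\circ q_{\mathfrak S_{\oss{T}}}$ and $\tilde\pi$ is a $*$-isomorphism), a fact the paper states without proof when defining the C$^*$-envelope, so there is no circularity in your use of the definitions. The only point you gloss over, as does the paper, is the set-theoretic convention that $\choquet\oss{T}$ is taken as a set of (representatives of equivalence classes of) representations so that the direct sum makes sense; this is harmless and standard.
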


It is worth noting that although $\oss{T}$ is finite-dimensional and $\cstar(T)$ is separable, it is possible for $\oss{T}$ to
admit uncountably many boundary representations (see, for instance, Corollary 3.6 in \cite{argerami--farenick2013a}). 

\begin{definition}\label{definition:matricial range}
The \emph{matricial range} of $T\in\B(\H)$ is the set
\[
\W(T)=\bigcup_{n\in\NN}\W_n(T),
\]
where
\[
\W_n(T)=\{\varphi(T):\ \varphi:\oss{T}\to M_n(\CC)\ \mbox{ is ucp}\}
\]
\end{definition}

The matricial range is a generalisation of the classical numerical range in operator theory.
Recall that the convex set
\[
W_{\rm s}(T)\,=\,\{\langle T\xi,\xi\rangle\,:\,\xi\in\H,\;\|\xi\|=1\}
\]
is called the \emph{numerical range} of $T$. We denote the closure of $W_{\rm s}(T)$ by 
$W(T)$  and again refer to $W(T)$ as the ``numerical range'' of $T$. It is well-known that $W(T)$ is the convex compact set
\[
W(T)\,=\,\{\varphi(T)\,:\,\varphi\;\mbox{ is a state on }\oss{T}\}\,;
\]
hence, $\W_1(T)=W(T)$. Note that $W(T)=W_{\rm s}(T)$ if $\H$ has finite dimension.

\bigskip

We  denote by $\bigoplus_j M_{k_j}(\CC)$ the $\ell^\infty$-sum. The compressions 
$\pi_\ell$ of $\bigoplus_j M_{k_j}(\CC)$ to the $\ell^{\rm th}$ direct summand are irreducible representations of
$\bigoplus_j M_{k_j}(\CC)$, and if the sum is finite then these are all the irreducible representations (up to
unitary equivalence).

\begin{definition}\label{definition:irreducible family}
Let $m\in\NN\cup\{\infty\}$. The family $\{T_j:\ j\in\{1,\ldots,m\}\,\}$ of operators $T_j\in M_{k_j}(\CC)$ is an \emph{irreducible family} if $\cstar(\bigoplus_jT_j)=\bigoplus_j M_{k_j}(\CC)$.
\end{definition}

The notation $\bigoplus_{j\ne k}T_j$ will be taken to mean the operator in $\bigoplus_j M_{k_j}(\CC)$ such that the entry corresponding to $k$ in the direct sum is equal to zero. 

\begin{theorem}\label{theorem: boundary reps and matricial range}
Let $T=\bigoplus_jT_j\in\bigoplus_jM_{k_j}(\CC)$, with 
$T_j\in M_{k_j}(\CC)$ such that $\{T_j\}_j$ is an irreducible family. 
Let $\pi_\ell$ be the irreducible representation given by compression to the $\ell^{\rm th}$ block. Then the following statements are equivalent:
\begin{enumerate}
\item $\pi_\ell$ is a boundary representation for $\oss{T}$;
\item $T_\ell\not\in\W_{k_\ell}(\bigoplus_{j\ne\ell}T_j)$.
\end{enumerate}
\end{theorem}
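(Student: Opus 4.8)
The plan is to characterize when the compression $\pi_\ell$ has a unique ucp extension, since irreducibility of $\pi_\ell$ is automatic (it is a coordinate compression of a full matrix summand). The key is to unravel what a ucp extension of $\pi_{\ell|\oss{T}}$ to $\cstar(T)=\bigoplus_j M_{k_j}(\CC)$ looks like. Such an extension is a ucp map $\Phi:\bigoplus_j M_{k_j}(\CC)\to M_{k_\ell}(\CC)$ whose restriction to $\oss{T}$ agrees with $\pi_\ell$, i.e.\ $\Phi(1)=I_{k_\ell}$ and $\Phi(T)=T_\ell$ (so also $\Phi(T^*)=T_\ell^*$). Because the domain is a direct sum of matrix algebras, I would decompose $\Phi=\sum_j \Phi_j$, where $\Phi_j$ is the composition of $\Phi$ with the inclusion of the $j$-th summand; each $\Phi_j:M_{k_j}(\CC)\to M_{k_\ell}(\CC)$ is completely positive, and the normalization forces $\sum_j\Phi_j(1)=I_{k_\ell}$.

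First I would treat the forward direction (1) $\Rightarrow$ (2) by contraposition: assume $T_\ell\in\W_{k_\ell}(\bigoplus_{j\ne\ell}T_j)$, meaning there is a ucp map $\psi:\oss{\bigoplus_{j\ne\ell}T_j}\to M_{k_\ell}(\CC)$ with $\psi(\bigoplus_{j\ne\ell}T_j)=T_\ell$. By Arveson's extension theorem, $\psi$ extends to a ucp map $\widetilde\psi$ on $\cstar(\bigoplus_{j\ne\ell}T_j)=\bigoplus_{j\ne\ell}M_{k_j}(\CC)$. Precomposing with the projection that kills the $\ell$-th coordinate yields a ucp extension $\Phi$ of $\pi_{\ell|\oss{T}}$ to all of $\cstar(T)$. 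This $\Phi$ is genuinely different from $\pi_\ell$ because $\pi_\ell$ is nonzero on the $\ell$-th summand while $\Phi$ vanishes there; hence $\pi_\ell$ does not have a unique extension and is not a boundary representation.

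For the converse (2) $\Rightarrow$ (1), I would argue that if $\Phi$ is any ucp extension of $\pi_{\ell|\oss{T}}$ distinct from $\pi_\ell$, then in the decomposition $\Phi=\sum_j\Phi_j$ some summand $\Phi_j$ with $j\ne\ell$ must be nonzero; the main work is to show this forces $T_\ell\in\W_{k_\ell}(\bigoplus_{j\ne\ell}T_j)$. The natural mechanism is a multiplicative-domain or extreme-point argument: the element $1\in\oss{T}$ is sent by $\Phi$ to the identity, and if $\Phi$ restricted to the $\ell$-th summand were a proper (non-unital) compression, the ``missing'' part of the identity is supplied by the off-block maps $\Phi_j$, $j\ne\ell$. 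I expect the cleanest route is to show directly that a nontrivial extension produces, after normalizing the off-diagonal part, a ucp map from $\oss{\bigoplus_{j\ne\ell}T_j}$ into $M_{k_\ell}(\CC)$ sending $\bigoplus_{j\ne\ell}T_j$ to $T_\ell$; equivalently, one shows $\Phi(1\ominus e_\ell)$, where $e_\ell$ is the central projection onto the $\ell$-th summand, controls the map on the complementary summands. The hard part will be this extraction step: ensuring that the combination of the off-block completely positive pieces genuinely witnesses membership of $T_\ell$ in the matricial range of the complementary direct sum, rather than merely some averaged or degenerate map. I would handle the degenerate case where $\Phi(e_\ell)$ is not invertible by a limiting or direct-summand argument, reducing to the situation where the $\ell$-th block contributes either fully (giving $\Phi=\pi_\ell$) or not at all (giving a witness for (2)).
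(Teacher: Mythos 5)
Your forward direction ((1) $\Rightarrow$ (2), argued by contraposition) is correct and complete: given a ucp map $\psi$ witnessing $T_\ell\in\W_{k_\ell}(\bigoplus_{j\ne\ell}T_j)$, extending it by Arveson's extension theorem to all of $\bigoplus_{j\ne\ell}M_{k_j}(\CC)$ and precomposing with the coordinate projection that kills the $\ell^{\rm th}$ summand gives a ucp extension of $\pi_\ell|_{\oss{T}}$ sending the central projection $e_\ell$ to $0$, hence distinct from $\pi_\ell$. This is a clean variant of the paper's argument, which instead verifies that $M_{k_\ell}(\CC)\oplus 0$ is a boundary ideal and appeals to the \v Silov ideal.

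The converse is where there is a genuine gap, and it sits exactly where you write ``the hard part will be this extraction step.'' Two concrete problems. First, your opening claim that any ucp extension $\Phi\ne\pi_\ell$ must have a nonzero off-block component $\Phi_j$, $j\ne\ell$, is unproved: a priori $\Phi$ could have the form $\Phi_\ell\circ\pi_\ell$ with $\Phi_\ell$ a ucp map on $M_{k_\ell}(\CC)$ fixing $I$, $T_\ell$, $T_\ell^*$ but different from the identity. Ruling this out is equivalent to the assertion that the identity representation of $M_{k_\ell}(\CC)$ is a boundary representation for $\oss{T_\ell}$ whenever $T_\ell$ is irreducible---essentially the one-block instance of the theorem you are proving. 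This is true, but none of the mechanisms you name closes it: the multiplicative-domain argument needs $\Phi_\ell(T_\ell^*T_\ell)=T_\ell^*T_\ell$, which you do not have (Schwarz only gives $\Phi_\ell(T_\ell^*T_\ell)\geq T_\ell^*T_\ell$), and the extreme-point route amounts to redeveloping Arveson's boundary machinery. Second, in the mixed case $0\ne A:=\Phi(e_\ell)\ne I$, your proposed reduction to ``the $\ell$-th block contributes either fully or not at all'' is precisely what fails: $A$ is an arbitrary positive contraction. A Neumann-series argument (take $\Theta=\sum_{m\geq0}\Phi_\ell^m\circ\Psi$, where $\Psi$ is the off-block part) does produce the required witness when $\Phi_\ell^m(I)\to0$, but whenever $A$ has eigenvalue $1$ the series converges only to a \emph{non-unital} cp map, and analyzing the leftover corner puts you back at the rigidity problem of the first point; no ``limiting argument'' you gesture at resolves this.

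The missing ingredient is global, not per-extension: the paper invokes Arveson's theorem (Theorem~\ref{theorem: product of boundary}) that the product of \emph{all} boundary representations is completely isometric on $\oss{T}$. If $\pi_\ell$ is not boundary, every boundary representation $\pi$ sends the central projection $e_\ell$ to $0$ or $I$, and the value $I$ would force $\pi$ to be unitarily equivalent to $\pi_\ell$; hence all boundary representations kill the $\ell^{\rm th}$ block, so the compression onto $\bigoplus_{j\ne\ell}M_{k_j}(\CC)$ is completely isometric on $\oss{T}$. Its inverse is then a unital complete isometry, hence ucp, and composing with the compression to the $\ell^{\rm th}$ block exhibits $T_\ell\in\W_{k_\ell}(\bigoplus_{j\ne\ell}T_j)$. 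Without this existence theorem (or an equivalent, such as the \v Silov ideal or injective-envelope theory), the direct decomposition $\Phi=\sum_j\Phi_j$ you propose cannot be pushed to a proof.
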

\begin{proof}
For notational simplicity we will take $\ell=1$; this does not affect generality as we can achieve permutation of blocks by unitary conjugation, which is a complete isometry. 

Assume first that $T_1\in\W_{k_1}(\bigoplus_{j>1}T_j)$. We will show that this implies that $\pi_1$ is not boundary. By assumption there exists a ucp map $\varphi:\oss{\bigoplus_{j>1}T_j}\to M_{k_1}(\CC)$ that maps $\bigoplus_{j>1}T_j\mapsto T_1$. We can use this $\varphi$ to construct a ucp inverse to the restriction of canonical compression  $\varrho:\bigoplus_j X_j\mapsto\bigoplus_{j>1}X_j$ to $\oss{T}$. Namely, the map $\varrho':X\mapsto \varphi(X)\oplus X$ is ucp and $\varrho\varrho'(X)=X$, $\varrho'\varrho(Y)=Y$, for $X\in \oss{\bigoplus_{j>1}T_j}$, $Y\in\oss{T}$. So $\varrho$ is a complete isometry on $\oss{T}$. We can see $\varrho$ as the quotient map induced by the ideal $\K_1=M_{k_1}(\CC)\oplus0$. So we have proven that $\K_1$ is a boundary ideal for $\oss{T}$. As such, it is contained in the \v Silov ideal and thus in the kernel of any boundary representation; in particular $\pi_j(Z\oplus0)=0$ for all $j$ such that $\pi_j\in\choquet\oss{T}$ and all $Z\in M_{k_1}(\CC)$. 
As $\pi_1(Z\oplus0)=Z$ for all such $Z$, we conclude that $\pi_1$ is not a boundary representation. In other words, if $\pi_1$ is boundary then $T_1\not\in\W_{k_1}(\bigoplus_{j>1}T_j)$.

Conversely, if $\pi_1$ is not a boundary representation, then Theorem \ref{theorem: product of boundary} implies that the map induced by $T\mapsto\bigoplus_{j>1}T_j$ is completely isometric on $\oss{T}$. 
Indeed, given a boundary representation $\pi:\bigoplus_j M_{k_j}(\CC)\to\bh$, $\pi(I_{k_1}\oplus 0)$ is necessarily $I_\H$ or $0$. If it is the former, then $\dim\H=k_1$ and $\pi$ would be unitarily equivalent with $\pi_1$, a contradiction. So $\pi(Z\oplus0)=0$ for any $Z\in M_{k_1}(\CC)$ and any boundary representation $\pi$. Then Theorem \ref{theorem: product of boundary} justifies the assertion at the beginning of the paragraph. 

In conclusion, there exists a ucp inverse $\psi$ that maps $\bigoplus_{j>1}T_j\mapsto T$. Combining this with the compression to the first coordinate we get a ucp map with
\[
\bigoplus_{j>1}T_j\mapsto T\mapsto T_1,
\]
and so $T_1\in\W_{k_1}(\bigoplus_{j>1}T_j)$. We have thus shown that if $T_1\not\in\W_{k_\ell}(\bigoplus_{j>1}T_j)$, then $\pi_1$ is a boundary representation. 
\end{proof}

It is important to notice that Theorem \ref{theorem: boundary reps and matricial range} does not deal with all possible boundary representations in the case of infinite sums (see Proposition \ref{proposition: infinite sum with same eigenvalue}).

\begin{example}\label{eg1} For each $\lambda\in\CC $, let $T_\lambda\in\M_3(\CC)$ be given by
\[
T_\lambda\,=\,\left[\begin{array}{ccc} 0&1&0 \\ 0&0&0 \\ 0&0&\lambda\end{array}\right]\,.
\]
Then
\[
\cstare(\oss{T_\lambda})\,=\,  \left\{
       \begin{array}{lcl}
          \M_2(\CC)   &\;&      \mbox{if }\; |\lambda|\leq 1/2 \\
          \M_2(\CC)\oplus\CC  &\;& \mbox{if } \; |\lambda|>1/2
      \end{array}
      \right\}
      \,.
\]
\end{example}
\begin{proof}
The C$^*$-algebra generated by $\oss{T_\lambda}$ is $\M_2(\CC)\oplus\CC $. We 
have only two (classes of) irreducible representations, i.e. $\pi_1$ is compression to the upper-left $2\times2$ block, and $\pi_2$ is compression to the $(3,3)$-entry. 

One can tell right away that $\pi_1$ is a boundary representation, because the range of $\pi_2$
 is one-dimensional and thus has no room to fit the 3-dimensional operator system in. But we can also deduce the same from Theorem \ref{theorem: boundary reps and matricial range}. 
Because $\W_2(\lambda)=\{\lambda I_2\}$,
we see that
\[
\begin{bmatrix}0&1\\0&0\end{bmatrix}\not\in\W_2(\lambda),
\]
and so by Theorem \ref{theorem: boundary reps and matricial range} $\pi_1$ is a boundary representation. As \[\W_1\left(\begin{bmatrix}0&1\\0&0\end{bmatrix}\right)=B_{1/2}(0),\] the irreducible representation $\pi_2$
will be a boundary representation precisely when $\lambda\not\in B_{1/2}(0)$, i.e. when $|\lambda|>1/2$. 
\end{proof}

To conclude this section
we show below that the numerical range $W(T)$ and spectrum $\sigma(T)$
of $T$ capture information about the one-dimensional boundary representations of $\oss{T}$.
We already found in \cite{argerami--farenick2013a} that convexity plays a crucial role in understanding boundary representations. Here is more evidence of this relation:

\begin{proposition}\label{proposition: extremal spectra} Let $T\in\B(\H)$, $\lambda\in\CC$.
\begin{enumerate}
\item\label{ah1} If $\lambda=\rho(T)$ for some boundary representation $\rho$ for $\oss{T}$,
then $\lambda\in\sigma(T)\cap\partial W(T)$, and $\lambda$ is an extreme point of $W(T)$.
\item\label{ah2} Assume that $\lambda\in\sigma(T)\cap\partial W(T)$. If $\lambda$ is an extreme point of $W(T)$ and if $T$ is hyponormal,
then $\lambda=\rho(T)$ for some boundary representation $\rho$ for $\oss{T}$.
\end{enumerate}
\end{proposition}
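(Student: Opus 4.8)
The plan is to prove the two implications separately, using throughout the following observation: if $\rho$ is a boundary representation and $\rho(T)=\lambda$ is a scalar, then $\rho$ must be one-dimensional. Indeed, $\rho(T)=\lambda I$ forces $\rho(\cstar(T))$ to be the abelian C$^*$-algebra generated by $\lambda I$ and $I$, namely $\CC I$, whose commutant is all of $\B(\H_\rho)$; irreducibility of $\rho$ then forces $\dim\H_\rho=1$, so $\rho$ is a character. For \eqref{ah1} I would first note that a character of $\cstar(T)$ annihilates the non-invertible element $T-\lambda 1$, giving $\lambda\in\sigma(T)$, and that $\rho$ is a pure state. To see that $\lambda$ is extreme in $W(T)$, suppose $\lambda=\tfrac12(\mu_1+\mu_2)$ with $\mu_i\in W(T)$. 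Each $\mu_i$ is the value at $T$ of the (unique) state $\varphi_{\mu_i}$ on $\oss{T}$ with $\varphi_{\mu_i}(T)=\mu_i$, which extends by Arveson's theorem to a state $\Psi_i$ on $\cstar(T)$; then $\tfrac12(\Psi_1+\Psi_2)$ restricts on $\oss{T}$ to $\varphi_\lambda=\rho_{\vert\oss{T}}$. Since $\rho$ is boundary, this restriction has a unique ucp (that is, state) extension, so $\tfrac12(\Psi_1+\Psi_2)=\rho$; purity of $\rho$ forces $\Psi_1=\Psi_2=\rho$, hence $\mu_1=\mu_2=\lambda$. Thus $\lambda$ is extreme, and an extreme point of the compact convex set $W(T)\subset\CC$ lies in $\partial W(T)$.

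For \eqref{ah2} the strategy is again to manufacture a character, but now both its existence and the uniqueness of the extension will come out of a single argument. Let $\varphi_\lambda$ be the unique state on $\oss{T}$ with $\varphi_\lambda(T)=\lambda$, which is well defined because the extreme point $\lambda$ lies in $W(T)$, and let $\Psi$ be \emph{any} state on $\cstar(T)$ extending it. I claim that $\Psi$ is automatically the character $p(T,T^*)\mapsto p(\lambda,\bar\lambda)$; this at once exhibits a character $\rho:=\Psi$ with $\rho(T)=\lambda$ and shows that $\varphi_\lambda$ has a unique ucp extension, so that the one-dimensional $\rho$ is a boundary representation. To prove the claim I would pass to the GNS representation $(\pi_\Psi,\H_\Psi,\Omega)$ and set $S=\pi_\Psi(T)$. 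Then $S$ is hyponormal, since the $*$-homomorphism $\pi_\Psi$ carries the positive element $T^*T-TT^*$ to a positive operator; moreover $\langle S\Omega,\Omega\rangle=\lambda$, and $\lambda$ is extreme in $W(S)\subseteq W(T)$. Everything then reduces to the assertion that \emph{if $S$ is hyponormal and $\langle S\Omega,\Omega\rangle$ is an extreme point of $W(S)$ for a unit vector $\Omega$, then $S\Omega=\langle S\Omega,\Omega\rangle\,\Omega$.} Granting this, $S\Omega=\lambda\Omega$ and (by hyponormality of $S-\lambda$) $S^*\Omega=\bar\lambda\Omega$, so every word in $S,S^*$ acts on $\Omega$ by scalar evaluation and $\Psi$ agrees with the character on the dense $*$-subalgebra of polynomials, hence everywhere.

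The hard part will be the lemma above. After translating by $\mu:=\langle S\Omega,\Omega\rangle$ and rotating by a unimodular scalar, I may assume $\mu=0$ and that the imaginary axis supports $W(S)$, so that $\mathrm{Re}(S)=\tfrac12(S+S^*)\ge 0$ with $\langle\mathrm{Re}(S)\Omega,\Omega\rangle=0$; positivity then gives $\mathrm{Re}(S)\Omega=0$, i.e. $S^*\Omega=-S\Omega$ and so $\|S^*\Omega\|=\|S\Omega\|$. The hyponormal inequality $\|S\eta\|\ge\|S^*\eta\|$ is therefore saturated at $\Omega$, forcing $(S^*S-SS^*)\Omega=0$. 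The same reasoning applies to every $\eta$ in $N:=\ker\mathrm{Re}(S)$, and a short computation then yields $SN\subseteq N$ and $S^*N\subseteq N$, so that $N$ reduces $S$ and $S|_N$ is skew-adjoint, say $S|_N=iH$ with $H=H^*$ on $N$. Since $\Omega\in N$ and $W(S|_N)=iW(H)\subseteq W(S)$, the point $0$ is extreme in the interval $iW(H)$, hence an endpoint of $W(H)$; thus $H$ is semidefinite on $N$, and $\langle H\Omega,\Omega\rangle=0$ gives $H\Omega=0$, so $S\Omega=iH\Omega=0$, as required.

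I expect this lemma to be the genuine obstacle. The easy case is when $0$ is a corner of $W(S)$: two distinct supporting lines give two independent relations among $S\Omega$ and $S^*\Omega$ and force $S\Omega=S^*\Omega=0$ with no appeal to hyponormality. A \emph{smooth} extreme point, however, supplies only one supporting relation, and it is exactly there that hyponormality must enter—through the reducing subspace $N$ and the reduction to a semidefinite self-adjoint operator—to extract the eigenvector. Along the way I would also verify the routine facts that a unital $*$-representation cannot enlarge the numerical range (so $W(\pi_\Psi(T))\subseteq W(T)$), that a state on $\oss{T}$ is determined by its value at $T$, and that characters are pure states; none of these should present difficulty.
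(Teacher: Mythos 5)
Your proof is correct. Part \eqref{ah1} is essentially the paper's own argument (the character kills $T-\lambda 1$, giving $\lambda\in\sigma(T)$; average two state extensions, invoke uniqueness of the ucp extension and purity of the multiplicative state). Part \eqref{ah2}, however, takes a genuinely different route. The paper splits the work in two: \emph{existence} of a character $\rho$ with $\rho(T)=\lambda$ is imported from Theorem 3.1.2 of \cite{arveson1969} --- the only place the hypothesis $\lambda\in\sigma(T)\cap\partial W(T)$ is used --- and \emph{uniqueness} of the extension is then proved by a Krein--Milman argument on the weak$^*$-compact convex set of state extensions: each extreme extension is pure (because $\lambda$ is extreme in $W(T)$), its GNS representation $\pi$ is irreducible, and hyponormality (through the fact that the closed numerical range of a hyponormal operator is the closed convex hull of its spectrum) together with Hildebrandt's Satz 2 forces $\pi(T)\xi=\lambda\xi$, so every extreme extension is multiplicative and equals $\rho$. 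You instead prove a single self-contained lemma --- for hyponormal $S$, an \emph{attained} extreme point of $W(S)$ is a reducing eigenvalue for the attaining vector --- and apply it to the GNS triple of an \emph{arbitrary} state extension $\Psi$ of $\varphi_\lambda$ (where attainment at the cyclic vector is automatic), concluding that every extension is the evaluation character; existence and uniqueness come out of one argument. Your proof of the lemma is sound: for $\eta\in N=\ker\mathrm{Re}(S)$ one has $S^*\eta=-S\eta$, hence $\|S^*\eta\|=\|S\eta\|$, hence $(S^*S-SS^*)\eta=0$ by positivity of the commutator, and then $(S+S^*)S\eta=S^2\eta+S^*S\eta=S^2\eta+SS^*\eta=S(S+S^*)\eta=0$, so $N$ reduces $S$; on $N$ the operator is skew-adjoint, and extremality makes the corresponding self-adjoint operator semidefinite, which yields the eigenvector. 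Comparing what the two routes buy: the paper's proof is short on the page but outsources its analytic content to \cite{arveson1969} and \cite{hildebrandt1966}; yours is longer but self-contained, it replaces the Hildebrandt citation with a proof (and your corner-versus-smooth-point discussion correctly locates where hyponormality is indispensable --- e.g.\ $J_2(0)$ with $\xi=(1,1)/\sqrt 2$ attains the extreme point $1/2$ of its numerical range without $\xi$ being an eigenvector), and it never uses the hypothesis $\lambda\in\sigma(T)$, showing that for hyponormal $T$ this assumption is automatic (as also follows a posteriori from part \eqref{ah1}). The facts you defer as routine (a state on $\oss{T}$ is determined by its value at $T$; $W(\pi_\Psi(T))\subseteq W(T)$ for a unital representation; characters are pure states) are indeed routine.
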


\begin{proof} To prove \eqref{ah1}, note first that we have $\rho(T-\lambda I)=0$. As $\rho$
is multiplicative, this shows that $\lambda\in\sigma(T)$. Also, since $\rho(T)$ is scalar, we
deduce that $\rho$ is a state on $\oss{T}$, and thus $\lambda\in W(T)$. After we prove that $\lambda$
is an extreme point of $W(T)$, we will know that $\lambda\in\partial W(T)$.

Let $\varphi=\rho|_{\oss{T}}$.
Suppose that $\lambda_1,\lambda_2\in W(T)$ and that $\lambda=\frac{1}{2}\lambda_1+\frac{1}{2}\lambda_2$. As every state on
$\oss{T}$ extends to a state on $\cstar(\oss{T})$ (by the Hahn--Banach Theorem, coupled with the fact that a linear functional is positive if and only if it is unital and contractive),
there are states $\varphi_1$ and $\varphi_2$ on $\cstar(\oss{T})$ such that $\lambda_j=\varphi_j(T)$, $j=1,2$. Thus, the state
$\psi=\frac{1}{2}\varphi_1+\frac{1}{2}\varphi_2$ is an extension of $\varphi$ to $\cstar(\oss{T})$. Because $\rho$ is a
boundary representation for $\oss{T}$, $\psi=\rho$. That is,
$\rho=\frac{1}{2}\varphi_1+\frac{1}{2}\varphi_2$. But since $\rho$ is a pure state (because it is
multiplicative),
we deduce that $\varphi_1=\varphi_2=\rho$; hence, $\lambda_1=\lambda_2=\lambda$, which implies that $\lambda$ is an extreme point of $W(T)$.

For the proof of \eqref{ah2},
the hypothesis $\lambda\in\sigma(T)\cap\partial W(T)$ implies that there is a homomorphism
$\rho:\cstar(\oss{T})\rightarrow\CC $ (that is, a $1$-dimensional representation $\rho$) such that $\lambda=\rho(T)$ \cite[Theorem 3.1.2]{arveson1969}.
Assume that $\lambda$ is an extreme point of $W(T)$ and that 
$T$ is hyponormal. 
As mentioned above, the hypothesis $\lambda\in\sigma(T)\cap\partial W(T)$ implies that
there is a homomorphism $\rho:\cstar(\oss{T})\rightarrow\CC $ such that $\lambda=\rho(T)$.
Let $\varphi=\rho|_{\oss{T}}$ and define
\[
C_\varphi\,=\,\{\varphi\,:\,\varphi\;\mbox{is a state on }\cstar(\oss{T})\mbox{ such that }\varphi|_{\oss{T}}=\rho|_{\oss{T}}\}\,.
\]
The set $C_\varphi$ is evidently convex and weak*-compact. Thus, to show that  $C_\varphi$ consists of a single point it is sufficient to show that the only extreme point of $C_\varphi$ is $\rho$ itself.
To this end, select an extreme point $\varphi$ of $C_\varphi$. Because $\varphi(T)=\lambda$ is an extreme point of $W(T)$, $\varphi$ is an extremal
state on $\cstar(\oss{T})$; hence, via the GNS decomposition, there are a Hilbert space $\H_\pi$, an irreducible representation $\pi:\cstar(\oss{T})\rightarrow\B(\H_\pi)$, and
a unit vector $\xi\in\H_\pi$ such that $\varphi(A)=\langle\pi(A)\xi,\xi\rangle$ for every $A\in\cstar(\oss{T})$. In particular, $\lambda=\langle\pi(T)\xi,\xi\rangle$. Now since the
numerical range of $\pi(T)$ is a subset of the numerical range of $T$, $\lambda$ is an extreme point of $W(\pi(T))$. Moreover, as 
$T$ is hyponormal, we have that
$[\pi(T)^*,\pi(T)]=\pi\left([T^*,T]\right)$
is positive and so $W(\pi(T))$ coincides with the convex hull of the spectrum of $\pi(T)$. 
The equation $\lambda=\langle\pi(T)\xi,\xi\rangle$ and the fact that $\lambda\in \sigma\left( \pi(T) \right)\cap \partial W(\pi(T))$ imply that
$\pi(T)\xi=\lambda\xi$ and $\pi(T)^*\xi=\overline\lambda \xi$ \cite[Satz2]{hildebrandt1966}. Thus, $\varphi$ is a homomorphism and agrees with $\rho$ on the generating set $\oss{T}$;
hence, $\varphi=\rho$.
\end{proof}

It is interesting to contrast \eqref{ah1} of Proposition \ref{proposition: extremal spectra} with Theorem 3.1.2 of \cite{arveson1969}, which states that if
$\lambda\in\sigma(T)\cap\partial W(T)$, then $\lambda=\rho(T)$ for some boundary representation $\rho$ for the nonselfadjoint operator algebra
$\osp_T\subset\B(\H)$ given by the norm closure of all operators of the form $p(T)$,
for polynomials $p\in\CC\,[t]$. In this latter assertion,
there is no requirement that $\lambda$ be an extreme point of $W(T)$, and this is one way in which we see that the operator spaces $\osp_T$ and $\oss{T}$ differ fundamentally.

\section{Jordan Operator Systems}\label{jordan op sys}

We consider Jordan operators for several reasons: they are irreducible as operators in their own matrix algebras; 
they are expressed in terms of fairly simple matrices; they allow us to determine with certain ease when a family is irreducible, and we have information available about their matricial ranges.


\begin{definition} An operator $J$ on an $n$-dimensional Hilbert space $\H$ is a \emph{basic Jordan block} if there is an orthonormal
basis of $\H$ for which $J$ has matrix representation
\[
J\,=\,J_n(\lambda)\,:=\, \left[ \begin{array}{ccccc}
\lambda & 1 & 0 & \dots & 0 \\
0 & \lambda & 1 & \ddots & \vdots \\
\vdots & \ddots & \ddots & \ddots & 0 \\
\vdots & & \ddots & \ddots & 1 \\
0 & \dots & \dots& 0 & \lambda \end{array} \right]
\]
for some $\lambda\in\CC $.
\end{definition}

Note that a basic Jordan block $J=J_n(\lambda)\in\B(\H)$ is an irreducible operator. Thus, $\cstar(\oss{J})=\B(\H)$, which is a simple C$^*$-algebra.
Hence, the \v Silov boundary ideal $\mathfrak S_{\oss{J}}$ for $\oss{J}$ is trivial, which implies that $\cstare(\oss{J})=\cstar(\oss{J})=\B(\H)$. That is,
in Arveson's  terminology \cite{arveson2010}, $\oss{J}$ is a reduced operator system.

If, on the other hand, we consider the unilateral shift $S$ on $\ell^2(\NN)$, we have that $\oss{S}$ is not reduced---since $\cstare(S)=C(\TT)$, which cannot contain compact operators.

Now what is the situation if we form direct sums of basic Jordan blocks of various sizes, but with a fixed eigenvalue $\lambda$? Does the $\cstar$-envelope behave like the case of the finite or the infinite-dimensional shift?
It turns out that there is strong dichotomy, depending on how the sizes
of the blocks behave.

\begin{proposition}\label{proposition: infinite sum with same eigenvalue}
If $J=\displaystyle\bigoplus_{k=1}^\infty J_{m_k}(\lambda)\in\B\left(\ell^2(\NN )\right)$
and
$m=\sup\{m_k:\ k\in\NN\}$, then
\[
\cstare(\oss{J})=\begin{cases}C(\TT),&\mbox{  if }m=\infty\,; \\ \M_m(\CC),&\mbox{  if } m<\infty\,.\end{cases}
\]
\end{proposition}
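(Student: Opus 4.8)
The plan is to reduce to the nilpotent case and then, in each of the two regimes, to exhibit a pair of mutually inverse unital completely positive (ucp) maps between $\oss{J}$ and a single reference operator system. By the elementary lemma recalled in the Introduction, such a pair is a complete order isomorphism, and since the C$^*$-envelope depends only on the complete order isomorphism class of the operator system, the two envelopes must coincide. Since $\lambda\cdot 1\in\oss{J}$ we have $\oss{J}=\oss{J_0}$ for $J_0=\bigoplus_k J_{m_k}(0)$, so throughout I may assume $\lambda=0$ and write $N_k:=J_{m_k}(0)$ for the nilpotent blocks, which shift basis vectors downward.

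Suppose first that $m<\infty$. A finite supremum of positive integers is attained, so some block has size exactly $m$; let $\Phi\colon\oss{J}\to\oss{J_m(0)}$ be the compression onto one such block, a ucp map with $\Phi(J)=J_m(0)$. Conversely, for each $k$ the block $N_k$ is precisely the compression of $J_m(0)$ to its first $m_k$ coordinates (the top-left $m_k\times m_k$ corner, using $m_k\le m$), so the $\ell^\infty$-direct sum of these corner compressions is a ucp map $\psi\colon\oss{J_m(0)}\to\oss{J}$ with $\psi(J_m(0))=\bigoplus_k N_k=J$. Both composites fix $1$, the generator, and its adjoint, so $\Phi$ and $\psi$ are mutually inverse and $\oss{J}\cong\oss{J_m(0)}$. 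As $J_m(0)$ is a basic Jordan block, the discussion preceding this proposition gives $\cstare(\oss{J_m(0)})=\M_m(\CC)$, whence $\cstare(\oss{J})=\M_m(\CC)$.

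Now suppose $m=\infty$; the reference system is $\oss{S}$ for $S$ the unilateral shift on $\ell^2(\NN)$, for which $\cstare(\oss{S})=C(\TT)$ was noted above. One direction is immediate: compression to the first $m_k$ coordinates carries $S^*$ to $N_k$, so the $\ell^\infty$-direct sum of these compressions is a ucp map $\psi\colon\oss{S}\to\oss{J}$ with $\psi(S^*)=J$ (recall $\oss{S}=\oss{S^*}$). For the reverse map I would proceed by a limiting argument. Let $\beta_k\colon\oss{J}\to\M_{m_k}(\CC)$ be the compression onto the $k$-th block, so $\beta_k(J)=N_k$, and view $\M_{m_k}(\CC)$ inside $\B(\ell^2(\NN))$ via the first $m_k$ coordinates. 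The set of ucp maps from the three-dimensional system $\oss{J}$ into the von Neumann algebra $\B(\ell^2(\NN))$ is compact in the topology of pointwise weak$^*$ convergence, so the sequence $(\beta_k)$, taken along a subsequence with $m_k\to\infty$, has a ucp cluster point $\Theta\colon\oss{J}\to\B(\ell^2(\NN))$. Since the blocks $N_k$ shift basis vectors downward, for fixed $i,j$ the $(i,j)$ entry of $N_k$ stabilises for large $m_k$, so $N_k\to S^*$ in the weak operator topology; matching entries then gives $\Theta(J)=S^*$, $\Theta(J^*)=S$ and $\Theta(1)=I$, so $\Theta\colon\oss{J}\to\oss{S}$ is ucp. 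The correspondence $J\leftrightarrow S^*$ now makes $\Theta$ and $\psi$ mutually inverse, so $\oss{J}\cong\oss{S}$ and $\cstare(\oss{J})=\cstare(\oss{S})=C(\TT)$.

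The bounded case and the construction of the compression maps are routine; the main obstacle is the reverse map $\Theta$ in the unbounded case. There one must verify that a pointwise weak$^*$ cluster point of ucp maps is again ucp (complete positivity and unitality survive because the matrix positivity cones are weak$^*$-closed and the value at $1$ is weak$^*$-continuous on the bounded set of contractions), and, crucially, that the unboundedness $\sup_k m_k=\infty$ forces the limit to carry $J$ all the way to the full shift $S^*$ rather than to some proper truncation of it---this is exactly the point at which the dichotomy in the statement originates.
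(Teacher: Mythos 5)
Your proposal is correct. For $m<\infty$ it follows the same route as the paper: compress onto a block of maximal size and invert by the ucp direct sum of corner compressions, obtaining a complete order isomorphism $\oss{J}\cong\oss{J_m(0)}$, whence $\cstare(\oss{J})=\M_m(\CC)$ because the basic block is irreducible. For $m=\infty$, however, your argument is genuinely different. The paper writes down the inverse $\varphi(\alpha 1+\beta J+\gamma J^*)=\alpha 1+\beta S+\gamma S^*$ outright and verifies only that it is \emph{positive}, by matching the explicit positivity criterion $|\beta|\leq\frac{\alpha}{2}\sec\left(\frac{\pi}{m_k+1}\right)$ for Jordan blocks (\cite[Proposition 5.4]{farenick2004}) against the criterion $|\beta|\leq\frac{\alpha}{2}$ for the shift---this is where $\sup_k m_k=\infty$ enters---and then upgrades positivity to complete positivity for free by viewing $\oss{S}$ inside the abelian C$^*$-algebra $\cstare(\oss{S})=C(\TT)$, where positive maps are automatically completely positive \cite[Theorem 3.9]{Paulsen-book}. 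You instead produce the inverse as a point-weak$^*$ limit of the block compressions $\beta_k$, with complete positivity surviving because the matrix positivity cones are weak$^*$-closed, and with $\sup_k m_k=\infty$ entering through the convergence of the truncations $N_k$ to $S^*$. Both arguments are valid. Yours is softer and more portable: it uses neither the numerical-radius formulas nor the abelian-range trick, and it applies verbatim to any operator that is recovered as a weak$^*$ limit of its own block compressions; the paper's route, in exchange, yields an explicit description of the positivity cone of $\oss{J}$ rather than an isomorphism obtained by a limiting argument. Two points in your write-up should be tightened, though neither is a gap: (i) viewed as maps into $\B(\ell^2(\NN))$, the $\beta_k$ are \emph{not} unital---$\beta_k(1)$ is the projection onto the first $m_k$ coordinates---so they are merely completely positive contractions, and the limit is unital only because these projections converge weak$^*$ to $I$; your ``matching entries'' step does supply exactly this, but the phrase ``cluster point of ucp maps'' is not literally what you have; (ii) since $\oss{J}$ is finite-dimensional and the $\beta_k$ converge pointwise (weak$^*$) on the spanning set $\{1,J,J^*\}$, no compactness or cluster-point argument is needed at all: the whole subsequence converges pointwise, and the limit map is automatically linear, completely positive, and unital.
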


 \begin{proof} We will assume, without loss of generality, that $\lambda=0$, because $J$ and $J-\lambda I$
 generate the same operator system.

 We consider first the case $m=\infty$.
 Recall the following positivity conditions (see, for example, \cite[Proposition 5.4]{farenick2004}):
 \begin{equation}\label{pos1}
 \alpha1_k+\beta J_k(0) +\overline\beta J_k(0)^* \,\geq\,0 \,\Longleftrightarrow\,\alpha\geq0\mbox{ and }|\beta|\leq \frac{\alpha}{2}\sec\left(\frac{\pi}{k+1}\right )
 \end{equation}
 and
 \begin{equation}\label{pos2}
 \alpha1+\beta S +\overline\beta S^* \,\geq\,0 \,\mbox{ if and only if }\,\alpha\geq0\mbox{ and } |\beta|\leq\frac{\alpha}{2}\,,
 \end{equation}
 where $S$ is the unilateral shift operator on $\ell^2(\NN )$.
 In considering $\ell^2(\NN )=\bigoplus_{k\in\NN }\ell^2(\{1,\dots,m_k\})$,
 let $P_k$ denote the projection of $\ell^2(\NN )$ onto the $k$-th direct summand
 $\CC ^{m_k}=\ell^2(\{1,\dots,m_k\})$ and define $\psi:\oss{S}\rightarrow\oss{J}$ by
 \[
 \psi(X)\,=\,\bigoplus_{k=1}^\infty P_kXP_k\,,\;X\in\oss{S}\,.
 \]
 The map $\psi$ is clearly ucp, and it sends $S$ to $J$.

 Now define $\varphi:\oss{J}\rightarrow\oss{S}$ by
 \[
 \varphi(\alpha 1+ \beta J + \gamma J^*)\,=\,\alpha 1 S + \beta S + \gamma S^*\,.
 \]
 As a linear map, we see that $\varphi^{-1}=\psi$. Hence, we need only show that $\varphi$ is completely positive.
We clearly have
 \[
 \alpha 1+ \beta J + \overline\beta J^*\,\geq\,0 \;\Longleftrightarrow\;
 \alpha1_{m_k}+\beta J_{m_k}(0) +\overline\beta J_{m_k}(0)^* \,\geq\,0 \,,\;\forall\,k\in\NN \,.
 \]
This assertion above means, by \eqref{pos1}, that
 \[
  \alpha 1+ \beta J + \overline\beta J^*\,\geq\,0 \;\Longleftrightarrow\;
  \alpha\geq0\mbox{ and }|\beta|\leq \frac{\alpha}{2}\sec\left(\frac{\pi}{m_k+1}\right ),\forall k\in\NN .
 \]
But $m=\infty$ implies that $m_k$ is arbitrarily large for some suitably chosen $k$, and so
 \[
  \alpha 1+ \beta J + \overline\beta J^*\,\geq\,0 \;\mbox{ if and only if }\;
  \alpha\geq0\mbox{ and }|\beta|\leq \frac{\alpha}{2}.
 \]
 Therefore, by \eqref{pos2},
 \[
  \alpha 1+ \beta J + \overline\beta J^*\,\geq\,0 \;\mbox{ if and only if }\;
   \alpha 1+ \beta S + \overline\beta S^*\,\geq\,0\,.
 \]
Thus, the map $\varphi:\oss{J}\rightarrow\oss{S}$ is a unital, positive map. Because $\cstare(\oss{S})=C(\TT)$, we may view,
without loss of generality, $\oss{S}$ as an
operator subsystem of $C(\TT)$. In this regard, then, the positive linear map $\varphi$ maps $\oss{T}$ into an abelian $\cstar$-algebra,
and thus $\varphi$ is automatically completely positive \cite[Theorem 3.9]{Paulsen-book}.
Hence, $\cstare(\oss{T})\,\simeq\,\cstare(\oss{S})\,=\, C(\TT)$.

Suppose now that $m<\infty$; that is, $m=\max\{m_k:\ k\in\NN\}$. We may assume without loss of generality that
$m=m_1$. Consider the quotient map
\[q:\alpha 1+\beta J+\gamma J^*\mapsto  \alpha 1_{m_1} + \beta J_{m_1}(0) + \gamma J_{m_1}(0)^*
\subset M_{m_1}(\CC).
\]
If $P_k$ is as above the compression onto the $k^{\rm th}$ block of size $m_k$, we have $P_kJ_{m_1}(0)P_k=
J_{m_k}(0)$ (this is where we use $m_k\leq m_1$). Now define
\[
\psi:\oss{J_{m_1}(0)}\rightarrow\B(\ell^2(\NN))
\]
by
\[
\psi(X)=\,\bigoplus_{k=1}^\infty P_kXP_k.
\]
The map $\psi$ is clearly ucp and, moreover, $\psi\circ q(J)=J$. This is to say that the map $q|_{\oss{J}}$ is ucp and has an ucp inverse; therefore, $q$ is
completely isometric. Thus
\[
\cstare(\oss{J})=\cstar(q(\oss{J}))=\cstar(J_{m_1}(0))=\M_{m_1}(\CC)\,,
\]
which completes the argument.
 \end{proof}

\begin{remark}
Proposition \ref{proposition: infinite sum with same eigenvalue} shows that Theorem \ref{theorem: boundary reps and matricial range} does not characterise all boundary representations. Indeed, the fact that the $\cstar$-envelope in the unbounded dimension case is $C(\TT)$ shows that there are boundary representations not coming from the $\pi_k$. 
\end{remark}

We will also need the following basic result, which is very well-known to the specialists; we have not been able, however, to find a reference in the literature. 

\begin{proposition}\label{proposition: selfadjoint case}
If $T=T^*\in\bh$, then $\cstare(\oss{T})=\CC\oplus\CC$.
\end{proposition}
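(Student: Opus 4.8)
The plan is to show directly that, when $T$ is not a scalar multiple of the identity, the operator system $\oss{T}$ is completely order isomorphic to the two-dimensional C$^*$-algebra $\CC\oplus\CC$, and then to invoke the facts that the C$^*$-envelope is an invariant of complete order isomorphism and that a unital C$^*$-algebra is its own C$^*$-envelope. (If $T=cI$ is scalar then $\oss{T}=\CC$ is one-dimensional and there is nothing to prove; the statement is understood in the non-degenerate case $\dim\oss{T}=2$.)

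First I would record that, since $T=T^*$, we have $\oss{T}=\spann\{1,T\}$ and $\cstar(T)$ is commutative, with $\sigma(T)\subset\RR$. Put $m=\min\sigma(T)$ and $M=\max\sigma(T)$; the assumption that $T$ is non-scalar gives $m<M$. Define a unital linear map $\varphi:\oss{T}\to\CC\oplus\CC$ by
\[
\varphi(\alpha 1+\beta T)=(\alpha+\beta m,\,\alpha+\beta M).
\]
Because $m\ne M$, the vectors $(1,1)$ and $(m,M)$ are linearly independent, so $\varphi$ is a bijection onto the two-dimensional algebra $\CC\oplus\CC$, with inverse obtained by solving the corresponding $2\times2$ linear system.

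The heart of the argument is to verify that $\varphi$ is a complete order isomorphism, i.e. that both $\varphi$ and $\varphi^{-1}$ are completely positive. For positivity of $\varphi$: a self-adjoint element $\alpha 1+\beta T$ (with $\alpha,\beta\in\RR$) is positive exactly when $\alpha+\beta t\ge0$ for all $t\in\sigma(T)$, which forces $\alpha+\beta m\ge0$ and $\alpha+\beta M\ge0$, that is, $\varphi(\alpha 1+\beta T)\ge0$. For positivity of $\varphi^{-1}$: given $(a,b)\ge0$, its preimage $\alpha 1+\beta T$ satisfies $\alpha+\beta m=a\ge0$ and $\alpha+\beta M=b\ge0$, and since $t\mapsto\alpha+\beta t$ is affine it is nonnegative on the whole interval $[m,M]\supseteq\sigma(T)$, whence $\alpha 1+\beta T\ge0$. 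Complete positivity then comes for free: $\varphi$ is a positive map into the commutative C$^*$-algebra $\CC\oplus\CC$, so it is automatically completely positive \cite[Theorem 3.9]{Paulsen-book}, and $\varphi^{-1}$ is a positive unital map out of the commutative C$^*$-algebra $\CC\oplus\CC$, which is likewise automatically completely positive \cite{Paulsen-book}. Hence $\varphi$ is a unital, surjective complete order isomorphism.

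Finally I would conclude at the level of envelopes. Since $\varphi$ is a surjective complete order isomorphism, $\oss{T}$ and $\CC\oplus\CC$ share the same C$^*$-envelope; and the operator system $\CC\oplus\CC$ is already a unital C$^*$-algebra, hence equal to its own C$^*$-envelope. Therefore $\cstare(\oss{T})=\CC\oplus\CC$. The only genuinely delicate point is the complete-positivity step, and the apparent obstacle of checking it directly evaporates once one notices that the algebra $\CC\oplus\CC$ on the other side of each map is commutative, reducing everything to the one-variable fact that an affine function nonnegative at the endpoints of $[m,M]$ is nonnegative throughout. (Alternatively, one could identify the Choquet boundary explicitly: since $\cstar(T)$ is commutative every boundary representation is one-dimensional, Proposition \ref{proposition: extremal spectra} restricts these to the extreme points $m,M$ of $W(T)=[m,M]$, and Theorem \ref{theorem: product of boundary} then yields $\cstare(\oss{T})=\CC\oplus\CC$.)
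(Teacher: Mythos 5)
Your proof is correct, and it takes a genuinely different route from the paper's. The paper works through the Choquet boundary: since $\cstar(T)$ is commutative, every irreducible representation is a character; Proposition \ref{proposition: extremal spectra} (whose second part applies because selfadjoint operators are hyponormal) identifies the boundary representations as exactly the two characters sending $T$ to $\min\sigma(T)$ and $\max\sigma(T)$; and Theorem \ref{theorem: product of boundary} then realises $\cstare(\oss{T})$ as the image of the product of these two characters, namely $\CC^2$. You bypass boundary representations altogether: you write down the explicit unital bijection $\varphi(\alpha 1+\beta T)=(\alpha+\beta m,\,\alpha+\beta M)$ onto $\CC\oplus\CC$, verify that $\varphi$ and $\varphi^{-1}$ are positive via the one-variable fact that an affine function on $[m,M]$ is nonnegative precisely when it is nonnegative at the endpoints, and upgrade positivity to complete positivity for free because the range (respectively, the domain) of the map is a commutative C$^*$-algebra; invariance of the C$^*$-envelope under unital complete order isomorphism, plus the fact that a unital C$^*$-algebra is its own envelope, finishes the argument. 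Your route is more elementary and self-contained---it needs neither the hyponormality/Hildebrandt machinery behind Proposition \ref{proposition: extremal spectra} nor Arveson's existence theorem for boundary representations---and it is very much in the spirit of the paper's own proof of Proposition \ref{proposition: infinite sum with same eigenvalue}, which likewise builds a unital order isomorphism and invokes automatic complete positivity into an abelian C$^*$-algebra. What the paper's argument buys in exchange is the explicit determination of the Choquet boundary itself, which is the data the paper tracks throughout; your closing parenthetical alternative is, in fact, precisely the paper's proof. One further point in your favour: you correctly flag that the statement implicitly assumes $T$ is non-scalar (for $T=cI$ the envelope is $\CC$), a degenerate case the paper's proof also silently excludes.
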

\begin{proof}
Since $T$ is selfadjoint, all its irreducible representations are one-dimensional. 
Proposition \ref{proposition: extremal spectra} ensures that the only boundary representations for $\oss{T}$ are the two that send $T$ to each of the extreme points of its spectrum. By Theorem \ref{theorem: product of boundary}, we conclude that $\cstare(\oss{T})=\CC^2$.
\end{proof}

\bigskip


\begin{definition} An operator $J\in\B(\H)$ is a  \emph{Jordan operator} if $J=\displaystyle\bigoplus_{j}J_{n_j}(\lambda_j)$ for some finite or infinite sequence
of basic Jordan operators $J_{n_j}(\lambda_j)$.
\end{definition}

In the definition of Jordan operator above, we do not require the $n_j$ nor the $\lambda_j$ to be distinct. 
But we do not allow a repetition of the same pair $n_j,\lambda_j$: if we are considering a direct sum of $d$ copies of
a basic Jordan block $J_n(\lambda)$, then we denote this by $J_n(\lambda)\otimes 1_d$.

Although every operator $T$ on a finite-dimensional Hilbert space is
similar to a Jordan operator $J$, the $\cstar$-envelopes of $\oss{T}$ and $\oss{J}$ may be be quite different.
For example, the idempotent $E=\begin{bmatrix}1 & x \\ 0 & 0\end{bmatrix}$ acting on $\CC ^2$,
is similar to the orthogonal projection $P=\begin{bmatrix}1&0\\0&0\end{bmatrix}$, but
if $x\neq 0$ then $E^*E\neq EE^*$ implies that $\cstar(E)=\M_2(\CC)$, which is simple; thus,
\[
\cstare(\oss{E})\,=\,\M_2(\CC)\neq \CC \oplus\CC \,=\,\cstare(\oss{P})\,.
\]

There are a number of subtleties in attempting to determine the C$^*$-envelope of a Jordan operator $J$ on a finite-dimensional Hilbert space $\H$
in terms of the sizes of the basic Jordan blocks that combine to form $J$ and the geometry of the spectrum $J$. We have seen this already in
Example \ref{eg1}; and Proposition \ref{theorem: abelian envelope} and Remark \ref{remark: abelian envelope on finite dimensional non-commutative case}
below are further illustrations.

It is clear that when $T$ is normal, $\cstare(T)$ is abelian (being a quotient of the $\cstar$-algebra generated by $T$). The $\cstar$-envelope can also be abelian for non-normal operators: we have already mentioned that $\cstare(S)=C(\TT)$ for the unilateral shift. For finite-dimensional Jordan operators with real eigenvalues, we can characterise precisely when their $\cstar$-envelopes are abelian:

\begin{proposition}\label{theorem: abelian envelope} Assume that
$J=\displaystyle\bigoplus_{k=1}^n\left(J_{m_k}(\lambda_k)\otimes 1_{d_k}\right)$ with each pair $(m_k,\lambda_k)$ unique.
If each $\lambda_k\in\RR$, then the following statements are equivalent:
\begin{enumerate}
\item $\cstare(\oss{J})$ is abelian;
\item $m_1=\cdots=m_k=1$.
\end{enumerate}
\end{proposition}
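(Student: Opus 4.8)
The plan is to combine Theorem~\ref{theorem: boundary reps and matricial range} with elementary numerical‑range geometry, after first stripping away the multiplicities. The ampliation $\alpha 1+\beta A+\gamma A^*\mapsto \alpha 1+\beta(A\otimes 1_d)+\gamma(A^*\otimes 1_d)$ is a complete order isomorphism of $\oss{A}$ onto $\oss{A\otimes 1_d}$, so replacing each summand $J_{m_k}(\lambda_k)\otimes 1_{d_k}$ by $J_{m_k}(\lambda_k)$ changes neither the Choquet boundary nor the C$^*$-envelope. Thus I may assume $J=\bigoplus_{k=1}^n J_{m_k}(\lambda_k)$ with the pairs $(m_k,\lambda_k)$ distinct. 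Since distinct basic Jordan blocks are pairwise non‑unitarily‑equivalent irreducible operators, $\{J_{m_k}(\lambda_k)\}_k$ is an irreducible family (Definition~\ref{definition:irreducible family}), whence $\cstar(\oss{J})=\bigoplus_k M_{m_k}(\CC)$ and its irreducible representations are exactly the block compressions $\pi_1,\dots,\pi_n$, with $\dim\pi_k=m_k$. By Theorem~\ref{theorem: product of boundary}, $\cstare(\oss{J})$ is the image of $\prod_{\pi_k\in\choquet\oss{J}}\pi_k$; hence $\cstare(\oss{J})=\bigoplus_{\pi_k\,\mathrm{boundary}}M_{m_k}(\CC)$, and this is abelian if and only if every boundary representation $\pi_k$ is one‑dimensional, because a boundary $\pi_k$ with $m_k\ge 2$ contributes the non‑abelian direct summand $\pi_k(\cstar(\oss{J}))=M_{m_k}(\CC)$.

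The implication (2)$\Rightarrow$(1) is then immediate: if every $m_k=1$, then $J=\bigoplus_k\lambda_k 1$ is self‑adjoint (each $\lambda_k\in\RR$), so $\cstar(\oss{J})$ is abelian and therefore so is its quotient $\cstare(\oss{J})$.

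For (1)$\Rightarrow$(2) I would argue the contrapositive. Suppose some $m_k\ge 2$, put $m=\max_k m_k\ge 2$, and let $\ell$ be the block of size $m$ whose (real) eigenvalue $\lambda_\ell$ is largest among all size‑$m$ blocks; this is well defined since distinct pairs force the size‑$m$ eigenvalues to be distinct. I claim $\pi_\ell$ is a boundary representation, which by the first paragraph makes $\cstare(\oss{J})\supseteq M_m(\CC)$ non‑abelian. Write $B=\bigoplus_{j\ne\ell}J_{m_j}(\lambda_j)$. If $\pi_\ell$ were not boundary, then Theorem~\ref{theorem: boundary reps and matricial range} produces a ucp map $\varphi\colon\oss{B}\to M_m(\CC)$ with $\varphi(B)=J_m(\lambda_\ell)$; composing $\varphi$ with the states of $\oss{J_m(\lambda_\ell)}$ shows $W(J_m(\lambda_\ell))\subseteq W(B)$. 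Now by \eqref{pos1}, $W(J_{m_j}(\lambda_j))$ is the closed disk of radius $r_{m_j}=\cos\!\big(\pi/(m_j+1)\big)$ centred at $\lambda_j\in\RR$, and $W(B)=\conv\bigcup_{j\ne\ell}W(J_{m_j}(\lambda_j))$. Since $r_{m_j}$ is strictly increasing in $m_j$ and $m$ is maximal, every disk occurring in $B$ has radius $\le r_m$, with equality only for the remaining size‑$m$ blocks. Hence the points of $W(B)$ of imaginary part $r_m$ all lie on the segment $\{\lambda_j+ir_m:m_j=m,\ j\ne\ell\}$, whereas the topmost point $\lambda_\ell+ir_m$ of $W(J_m(\lambda_\ell))$ satisfies $\lambda_\ell>\lambda_j$ for every such $j$. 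Thus $\lambda_\ell+ir_m\in W(J_m(\lambda_\ell))\setminus W(B)$, contradicting the inclusion. Therefore $\pi_\ell$ is a boundary representation of dimension $m\ge2$, and $\cstare(\oss{J})$ is non‑abelian.

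The main obstacle I expect is not the geometric endgame—which is clean once the eigenvalues are real and aligned on the axis—but the two supporting facts: that distinct basic Jordan blocks form an irreducible family, so that $\cstar(\oss{J})=\bigoplus_k M_{m_k}(\CC)$ and Theorem~\ref{theorem: boundary reps and matricial range} is applicable; and the ucp‑monotonicity used to pass from $J_m(\lambda_\ell)\in\W_m(B)$ to the numerical‑range inclusion $W(J_m(\lambda_\ell))\subseteq W(B)$, which here I need only at the level of states. Both are routine but must be recorded carefully, and it is precisely the reality of the $\lambda_k$ that pushes the highest points of all the disks onto a single horizontal line and makes the extremal‑eigenvalue choice decisive.
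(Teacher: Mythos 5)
Your argument is correct except for one degenerate case, and its key direction goes by a genuinely different route than the paper's. Your setup---stripping multiplicities via a complete order isomorphism, noting that the distinct blocks form an irreducible family so that $\cstar(\oss{J})=\bigoplus_k\M_{m_k}(\CC)$, and identifying $\cstare(\oss{J})$ with the direct sum of the blocks carrying boundary representations---is the frame the paper works in, and your (2)$\Rightarrow$(1) is the paper's. For (1)$\Rightarrow$(2), however, the paper argues softly: if the envelope is abelian, then every boundary representation is one-dimensional, so (the $\lambda_k$ being real) the quotient map onto the envelope sends $\alpha 1+\beta J+\gamma J^*$ to $\bigoplus_i(\alpha+\lambda_{h_i}\beta+\lambda_{h_i}\gamma)$, which depends only on $\alpha$ and $\beta+\gamma$ and hence annihilates $J-J^*$; since that quotient is completely isometric on $\oss{J}$, this forces $J=J^*$, i.e.\ all $m_k=1$. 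No boundary representation is ever exhibited and no numerical-range geometry enters. You instead prove the contrapositive constructively: taking $m=\max_k m_k\geq 2$ and, among the size-$m$ blocks, the one with largest eigenvalue, you combine Theorem \ref{theorem: boundary reps and matricial range} with the Haagerup--de la Harpe description of $W(J_k(\lambda))$ as the closed disk of radius $\cos(\pi/(k+1))$ about $\lambda$ to show that this compression is a boundary representation, so that $\cstare(\oss{J})$ contains the summand $\M_m(\CC)$. This ``topmost point of the disk'' argument is precisely the technique the paper reserves for the proof of (2)$\Rightarrow$(1) of Theorem \ref{theorem:full classfication for Jordan}, and your tie-breaking among several maximal-size blocks by extremal eigenvalue (where that later theorem assumes a strict size gap on one side) is a correct refinement of it. What your route buys is more information---an explicit non-abelian direct summand of the envelope---at the cost of heavier machinery; what the paper's route buys is brevity and independence from numerical-range estimates.

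The one gap: when $n=1$ and $m_1\geq 2$, your $B=\bigoplus_{j\neq\ell}J_{m_j}(\lambda_j)$ is an empty direct sum, so $\oss{B}$ is undefined and Theorem \ref{theorem: boundary reps and matricial range} does not literally apply. This case needs a separate (one-line) treatment: $\cstar(\oss{J})=\M_{m_1}(\CC)$ is then simple, so the \v Silov ideal, being a proper ideal, is trivial, and $\cstare(\oss{J})=\M_{m_1}(\CC)$ is not abelian. The paper's injectivity argument, by contrast, covers this case without modification.
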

\begin{proof}
If $m_1=\cdots=m_k=1$ then $J$ is diagonal.
It is then clear that $\cstar(\oss{J})$ is abelian and so
is any quotient of it; thus, $\cstare(\oss{J})$ is abelian (we can
reach the same conclusion by appealing to Proposition \ref{proposition: selfadjoint case},
since  in this case $J=J^*$).

Conversely, assume that $\cstare(\oss{J})$ is abelian.
Since $\cstar(\oss{J})$ is a finite direct sum
of full matrix algebras, 
its irreducible representations are precisely the compressions to the individual blocks. 
From this we conclude that every block that is not in the boundary ideal (i.e. those
corresponding to the boundary representations) has dimension 1.
Let $h_1,\ldots,h_d$ be the one-dimensional blocks that are preserved by the quotient
map $\pi$. So we have
\[
\pi(\alpha I+\beta J+\gamma J^*)=\bigoplus_{i=1}^d (\alpha+\lambda_{h_i}\beta
+{\lambda}_{h_i}\gamma)
\]
This map is completely isometric on $\cS_J$. So
we have a completely isometric inverse $\pi^{-1}$ onto $\cS_J$. Pick any $j\in\{1,\ldots,n\}$.
Let $p_j$ be the compression onto the $j^{\rm th}$ block on $\cS_J$. Then $p_j\circ\pi^{-1}$
is onto $p_j\cS_J$, and
\[
p_j\circ\pi^{-1}(\bigoplus_{i=1}^d (\alpha+\lambda_{h_i}\beta
+{\lambda}_{h_i}\gamma)
=\alpha I_j +\beta J_{m_j}(\lambda_j) + \gamma J_{m_j}(\lambda_j)^*
\]
for all $\alpha,\beta,\gamma\in\CC$. Setting $\alpha=1$, $\beta=\gamma=0$,
we have
\[
p_j\circ\pi^{-1}(\bigoplus_{i=1}^d \alpha)=\alpha I_j;
\]
now we set $\alpha=1$, $\beta=1$, and $\gamma=-1$, and we have
\[
p_j\circ\pi^{-1}(\bigoplus_{i=1}^d \alpha)=\alpha I_j+J_{m_j}(\lambda_j)-J_{m_j}(\lambda_j)^*.
\]
So it must be that $J_{m_j}(\lambda_j)=J_{m_j}(\lambda_j)^*\,$. This only happens when
$m_j=1$.
\end{proof}

\begin{remark}\label{remark: abelian envelope on finite dimensional non-commutative case}
At first sight the condition of having real eigenvalues in Proposition
\ref{theorem: abelian envelope} could be seen as a limitation of the technique
employed in the proof. This is not the case, however: consider
\[
J=\begin{bmatrix}1 \\ & \omega \\ & & \omega^2 \\ & & & 0 & 1\\ & & & 0 & 0\end{bmatrix},
\]
where $\omega=(-1-i\sqrt3)/2$. As usual, put $\oss{J}=\spann\{1,J,J^*\}$. It is easy to see
that $\cstar(\oss{J})=\CC^3\oplus \M_2(\CC)$, and so we have four Jordan blocks and four (classes of) irreducible representations.

Let
\[
U=\begin{bmatrix}1&1&1\\\omega^2&\omega&1\\ \omega&\omega^2&1\end{bmatrix}, \ \
V=\frac1{\sqrt3}\,\begin{bmatrix}1&0&0\\ 0&1&0\end{bmatrix}.
\]
Define
$\psi:\CC^3\to M_2(\CC)$ be given by
\[
\psi(\alpha,\beta,\gamma)=VU^*\,\begin{bmatrix}\alpha\\ &\beta\\ & & \gamma\end{bmatrix}\,UV^*.
\]
This map $\psi$ is linear and ucp by construction, and $\psi(1,\omega,\omega^2)=J$. By Theorem \ref{theorem: boundary reps and matricial range}, $\pi_4$ is not a boundary representation.

The other three one-dimensional
irreducible representations have to be boundary as the quotient needs to have dimension at least 3; so
 the $\cstar$-envelope of $\oss{J}$ is $\CC^3$.

\end{remark}

\begin{lemma}\label{lemma: estimate for J}
For any $\alpha,\beta,\lambda\in \CC$, $m\in\NN$, $m\geq2$,
\[
\left(|\alpha+\lambda\beta|^2+|\beta|^2\right)^{1/2}\leq\|\alpha I_m+\beta J_m(\lambda)\|\leq |\alpha+\lambda\beta|+|\beta|.
\]
\end{lemma}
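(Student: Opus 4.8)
The plan is to reduce the estimate to the nilpotent case and then bound the operator norm from above by the triangle inequality and from below by testing against a single basis vector. Writing $J_m(\lambda)=\lambda I_m + J_m(0)$, I would first observe that
\[
\alpha I_m+\beta J_m(\lambda)=(\alpha+\lambda\beta)I_m+\beta J_m(0),
\]
so that, setting $a=\alpha+\lambda\beta$ and $b=\beta$, the claim becomes $(|a|^2+|b|^2)^{1/2}\le\|aI_m+bJ_m(0)\|\le|a|+|b|$.

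For the upper bound I would simply apply the triangle inequality, $\|aI_m+bJ_m(0)\|\le|a|\,\|I_m\|+|b|\,\|J_m(0)\|$, together with the elementary fact that the truncated shift $J_m(0)$ is a nonzero partial isometry and hence has norm $1$; this gives $\|aI_m+bJ_m(0)\|\le|a|+|b|$ at once.

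For the lower bound I would use that the operator norm dominates the norm of the image of any unit vector. Here the key choice---and the place where the hypothesis $m\ge2$ enters---is the second standard basis vector $e_2$, which exists precisely because $m\ge2$. Since $J_m(0)e_2=e_1$, one computes
\[
(\alpha I_m+\beta J_m(\lambda))e_2=(aI_m+bJ_m(0))e_2=a e_2+b e_1.
\]
Because $e_1$ and $e_2$ are orthogonal, the Pythagorean identity gives $\|a e_2+b e_1\|^2=|a|^2+|b|^2$, and therefore $\|\alpha I_m+\beta J_m(\lambda)\|\ge(|a|^2+|b|^2)^{1/2}$, which is the desired lower bound after substituting back $a=\alpha+\lambda\beta$ and $b=\beta$.

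There is no serious obstacle here: the only points requiring care are the bookkeeping of the Jordan-block convention (that $J_m(0)$ carries its $1$'s on the superdiagonal, so that $J_m(0)e_2=e_1$) and the verification that $\|J_m(0)\|=1$. The mild subtlety worth flagging is that the lower bound is realised \emph{exactly} by the test vector $e_2$, with no slack, because the two contributions land in orthogonal coordinates; this orthogonality is what converts a crude inequality into the clean bound $(|a|^2+|b|^2)^{1/2}$.
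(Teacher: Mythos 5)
Your proposal is correct and follows essentially the same route as the paper: both rewrite the operator as $(\alpha+\lambda\beta)I_m+\beta J_m(0)$ (the paper writes $J_m(0)$ as $S_m^*$ for the truncated shift $S_m$), obtain the upper bound by the triangle inequality, and obtain the lower bound by applying the operator to $e_2$ and using orthogonality of $e_1$ and $e_2$. No differences worth noting.
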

\begin{proof}
Note that $\alpha 1+\beta J_m(\lambda)=(\alpha+\lambda\beta)1+\beta S_m^*$, where $S_m$ is the $m$-dimensional shift operator.
The right-hand-side estimate then follows by direct application of the triangle inequality.

For the left-hand-side inequality, take the vector $e_2\in\CC^{m}$; then
\[
((\alpha+\lambda\beta)I_m+\beta S_m^*)e_2=(\alpha+\lambda\beta)e_2+\beta e_1;
\]
the vector on the right has norm $\left(|(\alpha+\lambda\beta|^2+|\beta|^2\right)^{1/2}$, thus giving our estimate.
\end{proof}

The next proposition plays a key role in the proofs of Theorems  \ref{theorem:full classfication for Jordan} and \ref{theorem: middle blocks are non reduced when extremes are bigger}.
\begin{proposition}\label{proposition:extreme eigenvalues with bigger blocks win}
Let $J=\bigoplus_{j=1}^nJ_{m_j}(\lambda_j)$, with $\lambda_1,\ldots,\lambda_n\in\CC$ all distinct. Fix $k\in\{1,\ldots,n\}$. 
Assume that $\lambda_{s_1},\ldots,\lambda_{s_r}$ are the extreme points of $\conv\{\lambda_1,\ldots,\lambda_n\}$, and that
$\min\{m_{s_1},\ldots,m_{s_r}\}\geq m_k$.
If $\lambda_k$ is not an extreme point of $\conv\{\lambda_1,\ldots,\lambda_n\}$, then $\pi_k$ is not a boundary representation of $\oss{J}$.
\end{proposition}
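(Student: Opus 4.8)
The plan is to invoke Theorem \ref{theorem: boundary reps and matricial range}, which will reduce the problem to a single membership statement about a matricial range. Since the eigenvalues $\lambda_1,\dots,\lambda_n$ are distinct, the basic Jordan blocks $J_{m_j}(\lambda_j)$ are pairwise non-equivalent irreducible operators, so $\{J_{m_j}(\lambda_j)\}_{j=1}^n$ is an irreducible family — the commutant of $\{J,J^*\}$ reduces to the block scalars, and taking its commutant returns the full direct sum — and Theorem \ref{theorem: boundary reps and matricial range} applies to $J$ with $\ell=k$. Writing $S=\bigoplus_{j\ne k}J_{m_j}(\lambda_j)$, it then suffices to produce a single ucp map $\varphi\colon\oss{S}\to M_{m_k}(\CC)$ with $\varphi(S)=J_{m_k}(\lambda_k)$, for this exhibits $J_{m_k}(\lambda_k)\in\W_{m_k}(S)$ and so, by the theorem, shows that $\pi_k$ is not a boundary representation.

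To build $\varphi$ I would combine two facts: corner compressions of a Jordan block are again Jordan blocks, and the matricial range is convex. Because $\lambda_k$ is not an extreme point while $\lambda_{s_1},\dots,\lambda_{s_r}$ are, I can write $\lambda_k=\sum_{i=1}^r t_i\lambda_{s_i}$ with $t_i\ge0$ and $\sum_{i=1}^r t_i=1$; moreover distinctness of the eigenvalues forces $k\notin\{s_1,\dots,s_r\}$, so each summand $J_{m_{s_i}}(\lambda_{s_i})$ genuinely occurs inside $S$. For each $i$, let $V_i\colon\CC^{m_k}\to\CC^{m_{s_i}}$ be the isometry onto the first $m_k$ coordinates of the $s_i$-th summand; this is exactly where the hypothesis $m_{s_i}\ge m_k$ enters. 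Compressing to that corner preserves the whole superdiagonal of ones, so
\[
V_i^*\,S\,V_i \;=\; V_i^*\,J_{m_{s_i}}(\lambda_{s_i})\,V_i \;=\; J_{m_k}(\lambda_{s_i}),
\]
and the compression $\varphi_i\colon X\mapsto V_i^*XV_i$ is a ucp map on $\oss{S}$ with $\varphi_i(S)=J_{m_k}(\lambda_{s_i})$.

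Finally I would set $\varphi=\sum_{i=1}^r t_i\varphi_i$. A convex combination of ucp maps is again ucp, so $\varphi\colon\oss{S}\to M_{m_k}(\CC)$ is ucp, and since $J_{m_k}(\mu)=\mu I_{m_k}+J_{m_k}(0)$ depends affinely on $\mu$ one computes
\[
\varphi(S)=\sum_{i=1}^r t_i J_{m_k}(\lambda_{s_i})=\Big(\sum_{i=1}^r t_i\lambda_{s_i}\Big)I_{m_k}+\Big(\sum_{i=1}^r t_i\Big)J_{m_k}(0)=\lambda_k I_{m_k}+J_{m_k}(0)=J_{m_k}(\lambda_k).
\]
This places $J_{m_k}(\lambda_k)$ in $\W_{m_k}(S)$, and Theorem \ref{theorem: boundary reps and matricial range} then yields the claim.

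The only point I expect to require care is the identity $V_i^*J_{m_{s_i}}(\lambda_{s_i})V_i=J_{m_k}(\lambda_{s_i})$, namely that the top-left $m_k\times m_k$ corner reproduces the full superdiagonal of ones — which is possible precisely because $m_{s_i}\ge m_k$, so this is where the size hypothesis is indispensable. Everything else (convexity of the set of ucp maps and the affine dependence $\mu\mapsto J_{m_k}(\mu)$) is routine, and the reduction to matricial-range membership is immediate once the irreducibility of the family has been recorded.
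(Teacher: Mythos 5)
Your proposal is correct and follows essentially the same route as the paper's proof: both write $\lambda_k$ as a convex combination of the extreme eigenvalues, compress the (larger) extreme Jordan blocks to their top-left $m_k\times m_k$ corners to produce $J_{m_k}(\lambda_{s_i})$, take the corresponding convex combination of these ucp compressions to hit $J_{m_k}(\lambda_k)$, and conclude via Theorem \ref{theorem: boundary reps and matricial range}. The only (cosmetic) difference is that you define the ucp map directly on $\oss{\bigoplus_{j\ne k}J_{m_j}(\lambda_j)}$, whereas the paper defines it on the direct sum of the extreme blocks and leaves the precomposition with the compression onto those blocks implicit; your version also records the irreducibility of the family explicitly, which the paper takes for granted.
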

\begin{proof}

Let $P_{t}:\CC^{m_{s_t}}\to\CC^{m_k}$, $t=1,\ldots,r$  be the operators defined on the canonical basis by
\[
P_{t}\,e_j=\left\{\begin{matrix}e_j& \mbox{if } j\leq m_k \\ 0 & \mbox{otherwise}\end{matrix}\right.
\]
Straightforward computations show that
\[
P_{t}\,J_{m_{s_t}}(\lambda)P_t^*=J_{m_k}(\lambda),\ \ \ \ t=1,\ldots,r\]
for any number $\lambda$ (this is where one uses the hypothesis on the sizes of the blocks). 
By hypothesis, we can find convex coefficients $a_{t}\geq0$ with $\lambda_k=\sum_{t=1}^ra_{t}\,\lambda_{s_t}$, $\sum_{t=1}^ra_{t}=1$.

Now we
define $\psi:\M_{m_{s_1}}(\CC)\oplus\cdots\oplus \M_{m_{s_r}}(\CC)\longrightarrow \M_{m_k}(\CC)$ by
\[
\psi\left(\bigoplus_{t=1}^r A_{s_t}\right)=\sum_{t=1}^ra_{t}\,P_{t}A_{s_t}P_{t}^*.
\]
It is clear that $\psi$ is ucp, since $P_{k,t}P_{k,t}^*=1_{m_k}$ and $\psi$ is made up of conjugations, convex
combinations, and direct sums.

As
\begin{align*}
\sum_{t=1}^ra_{t}\,P_{t}J_{m_{s_t}}(\lambda_{s_t})P_{t}^*
=\sum_{t=1}^ra_{t}\,J_{m_{k}}(\lambda_{s_t}) 
= J_{m_k}(\sum_{t=1}^ra_{t}\,\lambda_{s_t}) 
&= J_{m_k}(\lambda_k),
\end{align*}
we have $\psi\left(\bigoplus_{t=1}^r J_{m_{s_t}}(\lambda_{s_t})\right)=J_{m_k}(\lambda_k)$. Thus \[
J_{m_k}(\lambda_k)\in\W_{m_k}\left(\bigoplus_{t=1}^r J_{m_{s_t}}(\lambda_{s_t})\right).
\] By Theorem \ref{theorem: boundary reps and matricial range}, $\pi_k$ is not a boundary representation. 
\end{proof}

We are now in position to determine the boundary representations of the operator system generated by a finite-dimensional Jordan operator with real eigenvalues. 

\begin{theorem}\label{theorem:full classfication for Jordan}
Let $J=\bigoplus_{j=1}^nJ_{m_j}(\lambda_j)$, with $\lambda_1>\cdots>\lambda_n\in\RR$. If $k\in\{2,\ldots,n-1\}$, then the following statements are equivalent:
\begin{enumerate}
\item\label{theorem:full classfication for Jordan:1} $\pi_k$ is a boundary representation of $\oss{J}$;
\item\label{theorem:full classfication for Jordan:2} At least one of the following holds:
\begin{enumerate}
\item $\max\{m_1,\ldots,m_{k-1}\}<m_k$;
\item $\max\{m_{k+1},\ldots,m_n\}<m_k$.
\end{enumerate}
\end{enumerate}
If $k\in\{1,n\}$, then the following statements are equivalent:
\begin{enumerate}\setcounter{enumi}{2}
\item\label{theorem:full classfication for Jordan:3} $\pi_k$ is a boundary representation of $\oss{J}$;
\item\label{theorem:full classfication for Jordan:4} One of the following holds:
\begin{enumerate}
\item $m_k>1$;
\item $m_k=1$ and $\lambda_k\not\in W(J_j(\lambda_j))$, for all $j\in\{1,\ldots,n\}\setminus\{k\}$.
\end{enumerate}
\end{enumerate}
\end{theorem}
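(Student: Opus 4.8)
The plan is to reduce the entire statement, through Theorem~\ref{theorem: boundary reps and matricial range}, to membership questions for matricial ranges. Because $\lambda_1>\cdots>\lambda_n$ are distinct, the blocks have pairwise disjoint spectra, so $\{J_{m_j}(\lambda_j)\}_j$ is an irreducible family and $\cstar(\oss{J})=\bigoplus_jM_{m_j}(\CC)$; Theorem~\ref{theorem: boundary reps and matricial range} then says that, for every $k$, the representation $\pi_k$ is a boundary representation if and only if $J_{m_k}(\lambda_k)\notin\W_{m_k}\bigl(\bigoplus_{j\ne k}J_{m_j}(\lambda_j)\bigr)$. All four implications therefore amount to either constructing a ucp map that places $J_{m_k}(\lambda_k)$ in the matricial range of the complementary sum, or proving that no such map exists.

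I would first dispatch the two ``not boundary'' implications, which are the constructive ones. If neither (a) nor (b) of \eqref{theorem:full classfication for Jordan:2} holds, choose $a<k<b$ with $m_a,m_b\ge m_k$; then $\lambda_a>\lambda_k>\lambda_b$, and applying Proposition~\ref{proposition:extreme eigenvalues with bigger blocks win} to the three-block operator $J_{m_a}(\lambda_a)\oplus J_{m_k}(\lambda_k)\oplus J_{m_b}(\lambda_b)$ (whose extreme eigenvalues are $\lambda_a,\lambda_b$, both carrying blocks of size $\ge m_k$) yields $J_{m_k}(\lambda_k)\in\W_{m_k}\bigl(J_{m_a}(\lambda_a)\oplus J_{m_b}(\lambda_b)\bigr)$. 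Since the compression $\bigoplus_{j\ne k}X_j\mapsto X_a\oplus X_b$ is ucp, the matricial range is monotone and this membership passes to the full complementary sum, so $\pi_k$ is not boundary. For the extreme blocks, if \eqref{theorem:full classfication for Jordan:4} fails then $m_k=1$ and $\lambda_k\in W(J_{m_j}(\lambda_j))$ for some $j\ne k$; as $\W_{m_k}=\W_1=W$ and the numerical range of a direct sum is the convex hull of the summands' numerical ranges, $\lambda_k\in\W_1\bigl(\bigoplus_{j\ne k}J_{m_j}(\lambda_j)\bigr)$ and again $\pi_k$ is not boundary.

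Next I would settle the boundary implication in the scalar extreme case $m_k=1$. Here $\lambda_k$ is an endpoint of the interval $\conv\{\lambda_1,\dots,\lambda_n\}$ and every $W(J_{m_j}(\lambda_j))$ is a disc centred on the real axis at $\lambda_j$ with radius $r_j$. If $\lambda_k=\sum_{j\ne k}t_j p_j$ is a convex representation with $p_j\in W(J_{m_j}(\lambda_j))$, then taking real parts and using that $\lambda_k$ is the largest (or smallest) of the centres gives $\lambda_k\le\max_{j\ne k}(\lambda_j+r_j)$ (respectively the reverse inequality), which forces $\lambda_k\in W(J_{m_j}(\lambda_j))$ for some $j$. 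Contrapositively, the hypothesis $\lambda_k\notin W(J_{m_j}(\lambda_j))$ for all $j\ne k$ gives $\lambda_k\notin\W_1\bigl(\bigoplus_{j\ne k}J_{m_j}(\lambda_j)\bigr)$, so $\pi_k$ is boundary.

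The hard part will be the remaining boundary implication: that $m_k>1$, together with one side of $\lambda_k$ consisting entirely of strictly smaller blocks (an empty side when $k\in\{1,n\}$), forces $\pi_k$ to be boundary. The essential difficulty is that the first-order obstruction is useless, since a large block on the \emph{opposite} side can make $\lambda_k$ an interior point of the numerical range of $\bigoplus_{j\ne k}J_{m_j}(\lambda_j)$ and can even contain the whole disc $W(J_{m_k}(\lambda_k))$, so a numerical-range comparison cannot certify the claim. A comparison of the scalar norms $\|\alpha I+\beta J_{m_j}(\lambda_j)\|$ does succeed in favourable configurations---against a single competing block one evaluates at $\alpha=-\lambda_j\beta$, where $\|\alpha I+\beta J_{m_j}(\lambda_j)\|=|\beta|$, and uses the strict estimate $\|\alpha I+\beta J_{m_k}(\lambda_k)\|\ge(|\alpha+\lambda_k\beta|^2+|\beta|^2)^{1/2}>|\beta|$ of Lemma~\ref{lemma: estimate for J} (strict because $m_k>1$)---but it is inconclusive in general. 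The plan is therefore to work with matrix coefficients and to prove the rigidity statement that any realization $J_{m_k}(\lambda_k)\in\W_{m_k}\bigl(\bigoplus_{j\ne k}J_{m_j}(\lambda_j)\bigr)$ forces blocks of size $\ge m_k$ on \emph{both} sides of $\lambda_k$, one with larger and one with smaller eigenvalue; the one-sided hypothesis then yields a contradiction. The point is that the genuine nilpotent part present when $m_k>1$, which the lower estimate of Lemma~\ref{lemma: estimate for J} detects, cannot be manufactured by a purely one-sided eigenvalue shift, and it is here that the available information on the matricial ranges of Jordan blocks---the matrix analogue of the positivity conditions \eqref{pos1}---must be brought in. I expect this rigidity to be the main obstacle. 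Once it is established, case (2a) is immediate, case (2b) follows by reversing the order of the eigenvalues (a complete isometry on $\oss{J}$), and the extreme cases $k\in\{1,n\}$ are exactly those in which one of the two required sides is empty.
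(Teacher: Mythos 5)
The three implications you actually prove are correct and are essentially the paper's own arguments: the contrapositive of \eqref{theorem:full classfication for Jordan:1}$\Rightarrow$\eqref{theorem:full classfication for Jordan:2} via Proposition \ref{proposition:extreme eigenvalues with bigger blocks win} applied to a three-block subsystem followed by monotonicity of the matricial range under the ucp compression (this is the paper's ``appropriate subsystem'' step), and the two $m_k=1$ assertions via the description of $\W_1$ of the complementary sum as the convex hull of discs centred on $\RR$. The genuine gap is the remaining implication, which you do not prove but defer to a hoped-for ``rigidity'' lemma. For the extreme blocks $k\in\{1,n\}$ with $m_k\geq 2$ this deferral is unnecessary, and you abandoned the right tool: the paper settles exactly this case by the norm comparison you dismiss as inconclusive. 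Translate so that every $\lambda_j>0$ and scale so that $\lambda_2^2+2\lambda_2<\lambda_1^2$ (neither operation changes $\oss{J}$); then Lemma \ref{lemma: estimate for J} gives $\|J\|\geq(\lambda_1^2+1)^{1/2}>\lambda_2+1\geq\|\pi(J)\|$ for the compression $\pi$ deleting block $1$, whereas a non-boundary $\pi_1$ would force $\pi$ to be completely isometric on $\oss{J}$ by Theorem \ref{theorem: boundary reps and matricial range}. This argument is insensitive to the sizes $m_2,\dots,m_n$: after normalisation the extreme eigenvalue makes block $1$ strictly dominate the norms of \emph{all} other blocks at once, not just of a single competitor.

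For the middle blocks your rigidity lemma is false, so the plan cannot be completed along those lines. A membership $J_{m_k}(\lambda_k)\in\W_{m_k}\bigl(\bigoplus_{j\ne k}J_{m_j}(\lambda_j)\bigr)$ does not force blocks of size $\geq m_k$ on both sides: a single size-one block above $\lambda_k$ and a single long block just below it suffice. Concretely, $J_2(0)\in\W_2\bigl(J_1(1)\oplus J_m(-\varepsilon)\bigr)$ for small $\varepsilon>0$ and large $m$. In the orthonormal basis $f_\pm=(e_1\pm e_2)/\sqrt2$ one has the exact identity $J_2(0)=Q+(I-Q)^{1/2}C(I-Q)^{1/2}$, where
\[
Q=\begin{bmatrix}1/2&0\\0&0\end{bmatrix},
\qquad
C=\begin{bmatrix}0&-1/\sqrt2\\ 1/\sqrt2&-1/2\end{bmatrix},
\]
and $\|C\|=1$ but $\|C+\varepsilon I\|<1$ for all small $\varepsilon>0$ (the norming vector of $C$ is an eigenvector of $C^*C$ on which $C+C^*$ is strictly negative). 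Since every strict contraction of $M_2(\CC)$ lies in $\W_2(J_m(0))$ for $m$ large enough---truncate the minimal isometric dilation of the contraction, which is a shift of finite multiplicity, to produce nearby elements of $\W_2(J_m(0))$, then absorb the error using that $\W_2(J_m(0))$ is convex and contains the ball of radius $1/2$---we get $C+\varepsilon I\in\W_2(J_m(0))$, i.e.\ $C=\psi(J_m(-\varepsilon))$ for some ucp $\psi$. Then $x\oplus A\mapsto xQ+(I-Q)^{1/2}\psi(A)(I-Q)^{1/2}$ is ucp and sends $1\oplus J_m(-\varepsilon)$ to $J_2(0)$. By Theorem \ref{theorem: boundary reps and matricial range}, $\pi_2$ is then \emph{not} a boundary representation of $\oss{J_1(1)\oplus J_2(0)\oplus J_m(-\varepsilon)}$, although hypothesis (a) of \eqref{theorem:full classfication for Jordan:2} holds. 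So the obstacle is not that your rigidity statement is hard to prove: it is false, and with it the one-sided implication \eqref{theorem:full classfication for Jordan:2}$\Rightarrow$\eqref{theorem:full classfication for Jordan:1} itself collapses. You should also be aware that the paper's proof has the same blind spot: its Haagerup--de la Harpe separation argument uses $\max_{j\ne k}\cos\frac{\pi}{m_j+1}<\cos\frac{\pi}{m_k+1}$, which is the conjunction of (a) and (b), not their disjunction; under that two-sided hypothesis the separation (and the theorem's conclusion) is correct, but under ``(a) or (b)'' alone neither your approach nor the paper's establishes the claim.
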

\begin{proof}
\eqref{theorem:full classfication for Jordan:1}$\implies$\eqref{theorem:full classfication for Jordan:2} If \eqref{theorem:full classfication for Jordan:2} fails, then we are in the conditions of Proposition \ref{proposition:extreme eigenvalues with bigger blocks win} (if necessary, by considering an appropriate subsystem of $\oss{J}$) and so $\pi_k$ is not boundary. 

\eqref{theorem:full classfication for Jordan:2}$\implies$\eqref{theorem:full classfication for Jordan:1} Theorem 1 in \cite{haagerup-delaharpe1992} implies that 
\[
\lambda_j + i\,\cos\frac\pi{m_j+1}\in\W_1(J_{m_j}(\lambda_j))\subset B_{\cos\frac\pi{m_j+1}}(\lambda_j)
\]
(Haagerup and de la Harpe prove that $\cos\pi/(n+1)\in\W_1(J_n(0))$; as $i\,J_n(0)$ is unitarily equivalent to $J_n(0)$, one can construct a state $\varphi$ with $\varphi(J_n(\lambda))=\lambda+i\,\cos\pi/(n+1)$). 

Note that $\lambda_{k}+i\,\cos\frac\pi{m_k+1}\not\in\conv\bigcup_{j\ne k}B_{\cos\frac\pi{m_j+1}}(\lambda_j)$. Indeed, by hypothesis all points in the convex combination will have imaginary part less than $\max\{\cos\frac\pi{m_j+1}:\ j\ne k\}<\cos\frac\pi{m_k+1}$.

This implies that $J_{m_k}(\lambda_k)\not\in\W_{m_k}(\bigoplus_{j\ne k}J_{m_j}(\lambda_j))$ (otherwise, evaluating on states would contradict the previous paragraph). By Theorem \ref{theorem: boundary reps and matricial range}, $\pi_k$ is a boundary representation. 

\eqref{theorem:full classfication for Jordan:3}$\implies$\eqref{theorem:full classfication for Jordan:4} We will assume that $\pi_1$ is a boundary representation (the argument for $\pi_n$ is entirely similar). If $m_1=1$, then by Theorem \ref{theorem: boundary reps and matricial range} we have 
\[
\lambda_1\not\in W(\bigoplus_{j\ne1}J_{m_j}(\lambda_j))=\conv\bigcup_{j\ne1}W(J_{m_j}(\lambda_j)).
\]
In particular, $\lambda_1$ fails to be in each of the individual numerical ranges.

\eqref{theorem:full classfication for Jordan:4}$\implies$\eqref{theorem:full classfication for Jordan:3} If $m_k=1$ and  $\lambda_k\not\in W(J_{m_j}(\lambda_j))$, for all $j\in\{1,\ldots,n\}\setminus\{k\}$, then $\lambda_k\not\in\conv\bigcup_{j\ne k}W(J_{m_j}(\lambda_j))$; indeed, since $\lambda_k$ is at the extreme of the list $\lambda_1,\ldots,\lambda_n$, if $\lambda_k$ were in the convex hull then there would exist a fixed $j\ne k$ with $\lambda_1\in W(\lambda_{m_j}(\lambda_j))$ (these are all discs with centre on the real line); as it is not, it fails to be in the convex hull. Then Theorem \ref{theorem: boundary reps and matricial range} guarantees that $\pi_k$ is a boundary representation. 

Assume now that $m_k>1$. If $m_j=1$ for all $j\ne k$, then $W(J_{m_k}(\lambda_k))\not\subset W(\bigoplus_{j\ne k}J_{m_j}(\lambda_j))$ as the former is a ball and the latter a line. So in this case Theorem \ref{theorem: boundary reps and matricial range} implies that $\pi_k$ is a boundary representation. 
Otherwise, consider first the case $k=1$, i.e. $m_1>1$. 
Note that we can assume without loss of generality that $\lambda_n>0$ (since translating $J$ by a scalar multiple of the identity will still generate the same operator system, and eigenvalues and matricial ranges respect affine transformations). We will also assume that $\lambda_2^2+2\lambda_2<\lambda_1^2$; this can be achieved by multiplying $J$ by $4\lambda_2/(\lambda_1^2-\lambda_2^2)$, again without changing $\oss{J}$. These changes can alternatively be done by, instead of translating and scaling $J$, considering below---instead of the norm of $J$---the norm of $\frac{2(\lambda_2+c)}{(\lambda_1+c)^2+(\lambda_2+c)^2}\,(J+c\,I)$ for some $c>|\lambda_n|$.

Let $\pi$ be the representation $\pi:\bigoplus_{j}J_{m_j}(\lambda_j)\mapsto\bigoplus_{j\ne1}J_{m_j}(\lambda_j)$. If $\pi_1$ were not boundary, then this map would be completely isometric. We have, by Lemma \ref{lemma: estimate for J},
\[
\|J\|=\max_j\|J_{m_j}(\lambda_j)\|\geq\max_j(\lambda_j^2+1)^{1/2}=(\lambda_1^2+1)^{1/2}.
\]
On the other hand, again by Lemma \ref{lemma: estimate for J},
\[
\|\pi(J)\|=\max_{j\ne1}\|J_{m_j}(\lambda_j)\|\leq\max_{j\ne1}|\lambda_j|+1=\lambda_2+1
\]
So we have
\[
\|\pi(J)\|\leq\lambda_2+1=(\lambda_2^2+2\lambda_2+1)^{1/2}<(\lambda_1^2+1)^{1/2}\leq\|J\|.
\]
So $\pi$ is not completely isometric, and thus $\pi_1$ is a boundary representation. 

The case $k=n$ can be proven with the same method, by switching first from $J$ to $-J$. 
\end{proof}

\begin{remark}
The ideas in Theorem \ref{theorem:full classfication for Jordan} can certainly be applied to cases where the $\lambda_j$ are allowed to be complex. But the possibilities seem 
much harder to consider---as the example in Remark \ref{remark: abelian envelope on finite dimensional non-commutative case} already illustrates. Note also that for complex eigenvalues one has no control over the positions of the balls considered in the proof of \eqref{theorem:full classfication for Jordan:2}$\implies$\eqref{theorem:full classfication for Jordan:1} in Theorem \ref{theorem:full classfication for Jordan}.
\end{remark}

In the concrete case where eigenvalues are real and the blocks corresponding to the extreme eigenvalues are the biggest, we can calculate the $\cstar$-envelope very explicitly. 

\begin{corollary}\label{theorem: middle blocks are non reduced when extremes are bigger}
If
$
J=\displaystyle\bigoplus_{k=1}^n\left(J_{m_k}(\lambda_k)\otimes 1_{d_k}\right)$,
where $\lambda_1>\lambda_2>\cdots>\lambda_n$ are real, and if
$\max\{m_2,\ldots,m_{n-1}\}\leq\min\{m_1,m_n\}$, then
\[
\cstare(\oss{J})=\begin{cases} \M_{m_1}(\CC)\oplus \M_{m_n}(\CC)&\mbox{ if }\min\{m_1,m_n\}\geq2 \\
\CC\oplus\CC&\mbox{ if }m_1=m_n=1\\
\M_{m_n}(\CC)&\mbox{ if }m_1=1,\ m_n\geq2,\ |\lambda_1-\lambda_n|\leq\cos\frac\pi{(m_n+1)} \\
\CC\oplus \M_{m_n}(\CC)&\mbox{ if }m_1=1,\ m_n\geq2,\ |\lambda_1-\lambda_n|>\cos\frac\pi{(m_n+1)} \\
\M_{m_1}(\CC)&\mbox{ if }m_1\geq2,\ m_n=1,\ |\lambda_1-\lambda_n|\leq\cos\frac\pi{(m_1+1)} \\
\M_{m_1}(\CC)\oplus\CC&\mbox{ if }m_1\geq2,\ m_n=1,\ |\lambda_1-\lambda_n|>\cos\frac\pi{(m_1+1)} \\
\end{cases}
\]
\end{corollary}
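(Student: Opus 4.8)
The plan is to reduce the computation to deciding which compressions $\pi_k$ are boundary representations and then to invoke Theorem \ref{theorem: product of boundary}. First I would dispose of the multiplicities $d_k$: the block-wise ampliation $\bigoplus_k X_k\mapsto\bigoplus_k(X_k\otimes 1_{d_k})$ is a unital $*$-monomorphism, hence a complete order embedding, and it restricts to a complete order isomorphism of $\oss{J'}$ onto $\oss{J}$, where $J'=\bigoplus_{k=1}^n J_{m_k}(\lambda_k)$. Since the C$^*$-envelope is an operator-system invariant, $\cstare(\oss{J})=\cstare(\oss{J'})$, so I may assume every $d_k=1$ and apply Theorem \ref{theorem:full classfication for Jordan} directly. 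Now $\cstar(\oss{J})=\bigoplus_{k=1}^n M_{m_k}(\CC)$, whose irreducible representations up to unitary equivalence are exactly the compressions $\pi_k$ to the distinct blocks, and these are pairwise inequivalent because the $\lambda_k$ are distinct. Theorem \ref{theorem: product of boundary} then gives $\cstare(\oss{J})=\bigoplus_{k\in B}M_{m_k}(\CC)$, where $B$ is the set of indices for which $\pi_k$ is boundary, so everything comes down to computing $B$. (Throughout I assume $n\geq 2$; the case $n=1$ is a single reduced block.)

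Next I would eliminate the middle blocks. For $k\in\{2,\ldots,n-1\}$ the hypothesis $\max\{m_2,\ldots,m_{n-1}\}\leq\min\{m_1,m_n\}$ yields $m_k\leq m_1$ and $m_k\leq m_n$, so $\max\{m_1,\ldots,m_{k-1}\}\geq m_1\geq m_k$ and $\max\{m_{k+1},\ldots,m_n\}\geq m_n\geq m_k$; hence both alternatives of \eqref{theorem:full classfication for Jordan:2} fail and $\pi_k$ is not boundary. Thus $B\subseteq\{1,n\}$, and it remains only to test the two extremes via \eqref{theorem:full classfication for Jordan:4}. For this I record the shape of the relevant numerical ranges: $W(J_m(\lambda))$ is the closed disc of radius $\cos\frac{\pi}{m+1}$ centred at $\lambda$ (the fact already used in the proof of Theorem \ref{theorem:full classfication for Jordan}), so for real $\mu$ one has $\mu\in W(J_m(\lambda))$ if and only if $|\mu-\lambda|\leq\cos\frac{\pi}{m+1}$, and $W(J_1(\lambda))=\{\lambda\}$.

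The key simplification I would emphasise is that whenever an extreme block is scalar the numerical-range test in \eqref{theorem:full classfication for Jordan:4} collapses to a single comparison. Indeed, if $m_1=1$ (respectively $m_n=1$) then $\min\{m_1,m_n\}=1$ forces every middle $m_j=1$, so each middle numerical range is the single point $\{\lambda_j\}\neq\{\lambda_1\}$ and contributes no obstruction; the only constraint that can fail in $\lambda_1\notin W(J_{m_j}(\lambda_j))$, $j\neq 1$, is the comparison with the opposite extreme block, giving $\pi_1$ boundary exactly when $|\lambda_1-\lambda_n|>\cos\frac{\pi}{m_n+1}$, and symmetrically for $\pi_n$. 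Assembling the cases: if $\min\{m_1,m_n\}\geq 2$ both extremes are boundary by \eqref{theorem:full classfication for Jordan:4}, giving $M_{m_1}(\CC)\oplus M_{m_n}(\CC)$; if exactly one extreme is scalar, the nonscalar extreme is always boundary while the scalar one is boundary precisely under the stated distance inequality against the nonscalar block's radius, producing the four mixed cases; and if $m_1=m_n=1$ then all blocks are scalar, so $J=J^*$ and both endpoints $\lambda_1,\lambda_n$ give boundary representations, agreeing with Proposition \ref{proposition: selfadjoint case} and yielding $\CC\oplus\CC$.

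The argument carries no deep obstacle, being essentially a case analysis on top of Theorem \ref{theorem:full classfication for Jordan}. The one point that requires genuine care is the geometric bookkeeping of the preceding paragraph: recognising that the size hypothesis forces the middle blocks to be scalar \emph{exactly} when an extreme block is scalar is what allows the general criterion \eqref{theorem:full classfication for Jordan:4} to reduce to the clean single-distance condition $|\lambda_1-\lambda_n|\lessgtr\cos\frac{\pi}{m+1}$ recorded in the statement.
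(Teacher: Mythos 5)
Your proposal is correct and follows essentially the same route as the paper's proof: strip the multiplicities $d_k$ by a complete order isomorphism onto $\oss{J'}$ with $J'=\bigoplus_k J_{m_k}(\lambda_k)$, use Theorem \ref{theorem:full classfication for Jordan} to rule out the middle compressions and to test $\pi_1,\pi_n$ (noting, as the paper does, that a scalar extreme block forces all middle blocks to be scalar, so the criterion collapses to the single inequality $|\lambda_1-\lambda_n|\lessgtr\cos\frac{\pi}{m+1}$), and then assemble $\cstare(\oss{J})$ from the boundary compressions via Theorem \ref{theorem: product of boundary}. The case analysis matches the paper's six cases, including the appeal to Proposition \ref{proposition: selfadjoint case} when $m_1=m_n=1$.
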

\begin{proof}
 It is easy to see that
\[
\cstar(\oss{J})=\bigoplus_{k=1}^n\left(M_{m_k}(\CC) \otimes 1_{d_k}\right)
\]
It is also clear that $\oss{J}$ is completely order isomorphic to the operator system generated by
\begin{equation}\label{eq:J without multiplicities}
J'=\bigoplus_{k=1}^nJ_{m_k}(\lambda_k),
\end{equation}
and so we can eliminate the multiplicities from our computations. Thus, without loss of generality, we assume that
$J$ is of the form \eqref{eq:J without multiplicities}, and $\cstar(\oss{J})=\bigoplus_{k=1}^n\M_{m_k}(\CC)$; this, because the condition on the eigenvalues guarantees that the family is irreducible.

By Theorem \ref{theorem:full classfication for Jordan}, the only possible boundary representations are $\pi_1$ and  $\pi_n$. 

\noindent\textbf{Case 1: }$m_1>1$, $m_n>1$. Here Theorem \ref{theorem:full classfication for Jordan} guarantees that both $\pi_1$ and $\pi_n$ are boundary representations.

\noindent\textbf{Case 2: }$m_1=m_n=1$. We are in the situation of Proposition \ref{proposition: selfadjoint case}, so
$\cstare(\oss{J})=\CC\oplus\CC$ (i.e. both $\pi_1$ and $\pi_n$ are boundary representations).

\noindent\textbf{Case 3: }$m_1=1$, $m_n\geq2$, $|\lambda_1-\lambda_n|\leq\cos\pi/(m_n+1)$. Note that this
last condition is the same as $\lambda_1\in W(J_{m_n}(\lambda_n))$ (see \cite{haagerup-delaharpe1992}). 
Then Theorem \ref{theorem:full classfication for Jordan} implies that $\pi_1$ is not boundary.

\noindent\textbf{Case 4: }$m_1=1$, $m_n\geq2$, $|\lambda_1-\lambda_n|>\cos\pi/(m_n+1)$. So $\lambda_1\not\in W(J_{m_n}(\lambda_n))$.
The condition $m_1=1$ implies that $m_j=1$ for all $j\ne n$, and in particular $W(J_{m_j}(\lambda_j))=\{\lambda_j\}$ for all $j\ne n$. 
 We are in a
situation similar to Case 3, but in this case $\lambda_1\not\in W(\bigoplus_{j\ne1}J_{m_j}(\lambda_j))$. So Theorem \ref{theorem:full classfication for Jordan} implies that $\pi_1$ is boundary.

\noindent\textbf{Cases 5 and 6: } $m_1\geq2$, $m_n=1$. We did not  use that $\lambda_1>\lambda_n$
in Cases 3 and 4 (only that it was at the extreme of the list), so the same proofs apply with the roles of $1$ and $n$ reversed.
\end{proof}

In trying to classify the irreducible representations of a singly generated operator system of the form $\oss{T}$ with $T=\bigoplus_jT_j$ for an irreducible family, recall that  Theorem \ref{theorem: boundary reps and matricial range} gives us a characterisation of the boundary representations, namely $\pi_\ell$ is a boundary representation  if and only $T_\ell\not\in\W_{m_\ell}(\bigoplus_{j\ne\ell}T_j)$. So in principle one could go testing this condition starting with $T_1$, then $T_2$, etc., and determining which blocks do not correspond to boundary representations. After ``erasing'' those blocks we end up with a reduced operator system. But how can we be sure that if we perform this procedure in any order we will obtain the same result? After all, one could imagine that $T_1\in\W(\bigoplus_{j\geq2}T_j)$ in a way that the ucp map that realises $T_1$ depends essentially on $T_2$; and that $T_2\in\W(\bigoplus_{j\ne2}T_j)$ in a way that the ucp map that realises $T_2$ depends essentially on $T_1$. Is there a contradiction? We show below that no contradiction arises:

\begin{proposition}\label{proposition: coherence}
Let $T_1,\ldots,T_n\in\bh$ such that $T_1\in\W(\bigoplus_2^n T_j)$, $T_2\in\W(\bigoplus_{j\ne2}T_j)$. Then
\[
\oss{\bigoplus_1^nT_j}\simeq\oss{\bigoplus_2^nT_j}\simeq\oss{\bigoplus_{j\ne2}T_j}.
\]
\end{proposition}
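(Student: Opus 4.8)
The plan is to recognise that the whole statement consists of two independent applications of the complete-order-isomorphism construction already carried out in the first paragraph of the proof of Theorem~\ref{theorem: boundary reps and matricial range}, glued together by transitivity of $\simeq$. Read without its conclusion about boundary representations, that paragraph establishes the following erasure principle: if a block $T_\ell$ satisfies $T_\ell\in\W(\bigoplus_{j\ne\ell}T_j)$, then the compression map $\varrho_\ell\colon\oss{\bigoplus_jT_j}\to\oss{\bigoplus_{j\ne\ell}T_j}$ that deletes the $\ell$-th block is a complete order isomorphism. Indeed, choosing a ucp map $\varphi_\ell\colon\oss{\bigoplus_{j\ne\ell}T_j}\to\B(\H_\ell)$ with $\varphi_\ell(\bigoplus_{j\ne\ell}T_j)=T_\ell$ (which is exactly what membership in the matricial range provides), the map $X\mapsto$ ``reinsert $\varphi_\ell(X)$ in the $\ell$-th slot and keep $X$ elsewhere'' is a ucp two-sided inverse of $\varrho_\ell$ on the two operator systems, precisely as in the cited argument. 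I note that this direction of Theorem~\ref{theorem: boundary reps and matricial range} uses neither irreducibility of the family nor anything beyond the finiteness already built into the definition of $\W$, so it is available here.

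First I would apply the erasure principle with $\ell=1$: the hypothesis $T_1\in\W(\bigoplus_2^nT_j)$ is exactly the condition needed, giving a complete order isomorphism $\oss{\bigoplus_1^nT_j}\simeq\oss{\bigoplus_2^nT_j}$. Next I would apply it again, \emph{once more from the full system}, with $\ell=2$: the hypothesis $T_2\in\W(\bigoplus_{j\ne2}T_j)$ is exactly the condition to delete the second block, giving $\oss{\bigoplus_1^nT_j}\simeq\oss{\bigoplus_{j\ne2}T_j}$. Composing the inverse of the first isomorphism with the second (and using that inverses and composites of complete order isomorphisms are again such) yields the chain
\[
\oss{\bigoplus_2^nT_j}\;\simeq\;\oss{\bigoplus_1^nT_j}\;\simeq\;\oss{\bigoplus_{j\ne2}T_j},
\]
which is the assertion.

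The one point I would emphasise, and the only place where any care is required, is that both deletions are performed on the full system $\bigoplus_1^nT_j$ rather than in sequence. This is exactly what dissolves the apparent tension raised before the statement: one fears that the ucp map $\varphi_2$ realising $T_2$ genuinely uses the block $T_1$, so that $T_2$ might cease to be redundant once $T_1$ is removed. But the proposition never asks to delete $T_2$ from $\bigoplus_2^nT_j$; it asks only that deleting $T_1$ and deleting $T_2$ (each from $\bigoplus_1^nT_j$) give completely order isomorphic systems, and transitivity supplies this with no compatibility requirement relating $\varphi_1$ and $\varphi_2$. Consequently there is no substantive obstacle here: the entire content is the symmetry of the erasure construction in the chosen block and its being invoked twice, independently.
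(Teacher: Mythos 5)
Your proposal is correct and follows essentially the same route as the paper: the paper likewise extracts a ucp map $\varphi$ from the hypothesis $T_1\in\W(\bigoplus_2^n T_j)$, pairs the compression $P$ deleting the first block with the reinsertion map $Q:X\mapsto\varphi(X)\oplus X$, observes that these unital completely positive maps are mutually inverse on the two operator systems (hence complete order isomorphisms), and then obtains the second isomorphism ``in the same way'' by deleting $T_2$ from the full system, exactly as you do. Your added remarks---that this is the same erasure construction as in the first paragraph of the proof of Theorem~\ref{theorem: boundary reps and matricial range}, that it needs no irreducibility, and that both deletions are performed on the full system rather than sequentially---are accurate and consistent with the paper's argument.
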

\begin{proof}
By hypothesis there exists a ucp map \[\varphi:\oss{\bigoplus_2^n T_j}\to\oss{T_1}
\] with $\varphi(\bigoplus_2^nT_j)=T_1$.
Let 
\[
P:\oss{\bigoplus_1^n T_j}\to\oss{\bigoplus_2^n T_j}
\]
be the compression map, i.e. $P(\bigoplus_1^nX_j)=\bigoplus_2^nX_j$, and let 
\[
Q:\oss{\bigoplus_2^n T_j}\to\oss{\bigoplus_1^n T_j}
\]
be the map $X\mapsto \varphi(X)\oplus X$. As both $P$ and $Q$ are ucp and they map $\bigoplus_1^nT_j$ to itself, we have that $Q\circ P$ is the identity on $\oss{\bigoplus_1^n T_j}$. So both $P$, $Q$ are completely isometric, and we get the isomorphism 
$\oss{\bigoplus_1^nT_j}\simeq\oss{\bigoplus_2^nT_j}$. The other isomorphism is obtained in the same way. 
\end{proof}

\begin{remark}\label{remark: the impossible isomorphisms}
Note that one need not have the isomorphism with $\oss{\bigoplus_3^nT_j}$ in Proposition \ref{proposition: coherence}. For instance, let
\[
T_1=\begin{bmatrix}0&1\\0&0\end{bmatrix},\ T_2=\frac1{2}\,\begin{bmatrix}1&-1\\1&-1\end{bmatrix},\ \ T_3=1
\]
(note that $T_1,T_2$ are unitary conjugates of each other). 
Then $\oss{T_1\oplus T_2\oplus T_3}\simeq\oss{T_1}\simeq\oss{T_2}\not\simeq\oss{T_3}$.
\end{remark}
The last isomorphism can occur in adequate examples, as shown below. We will also address the issue that in the conditions of Proposition \ref{proposition: coherence}, there is no reason to expect that $\oss{T_1}\simeq\oss{T_2}$. Indeed, let
\[
T=1\oplus
\begin{bmatrix}1/2&1\\0&1/2\end{bmatrix}\oplus\begin{bmatrix}0&1\\0&0\end{bmatrix}\oplus
\begin{bmatrix}2&1\\0&2\end{bmatrix}.
\] 
Then Proposition \ref{proposition:extreme eigenvalues with bigger blocks win} guarantees that the first two blocks are in the matricial range of the last two, so by Theorem \ref{theorem:full classfication for Jordan}
\[
\oss{T}\simeq\oss{\begin{bmatrix}0&1\\0&0\end{bmatrix}\oplus
\begin{bmatrix}2&1\\0&2\end{bmatrix}}.
\] The first two blocks clearly generate non-isomorphic operator systems, as the first one will have dimension 1, and the second dimension 3. 

\section{Some Examples}

We show below some examples where one uses the results above to decide whether a given operator system generated by a Jordan operator is reduced.

\begin{example}\label{remark: sizes do matter} If
\[
J=\begin{bmatrix}0 \\ & 1&1\\ &0&1\\ & & & 2\end{bmatrix}\,,
\]
then $\oss{J}$
is reduced. 
To verify that $\oss{J}$ is indeed reduced, it is enough to look at the combined matricial ranges. Note first that $\pi_2$ is certainly a boundary representation, because if it were not Theorem \ref{theorem: product of boundary} would make the $\cstar$-envelope either $\CC$ or $\CC\oplus\CC$, which cannot contain a 3-dimensional subspace (or we can use Theorem \ref{theorem:full classfication for Jordan} and notice that $B_{1/2}(1)$ contains neither $0$ or $2$; or Theorem \ref{theorem: boundary reps and matricial range} and notice that the numerical range of the direct sum $0\oplus2$ is the segment $[0,2]$ that contains no ball). We have
\[
2\not\in W\left(0\oplus\begin{bmatrix}1&1\\ 0&1\end{bmatrix}\right)
=\conv\left\{W(0)\cup W\left(\begin{bmatrix}1&1\\ 0&1\end{bmatrix}\right)\right\}
=\conv\{0\cup B_{1/2}(1)\},
\]
so $\pi_3$ is a boundary representation. And
\begin{align*}
0&\not\in W\left(\begin{bmatrix}1&1\\ 0&1\end{bmatrix}\oplus2\right)
=\conv\left\{W\left(\begin{bmatrix}1&1\\ 0&1\end{bmatrix}\right)\cup W(2)\right\}\\
&=\conv\{B_{1/2}(1),2\},
\end{align*}
so $\pi_1$ is boundary.

\end{example}

\begin{example}\label{example: reduced with bigger blocks in the middle} For the Jordan operator
$J=J_1(3)\oplus J_2(2)\oplus J_2(1)\oplus J_1(0)$,
the operator system $\oss{J}$ is reduced.

Again we look at the numerical ranges. We have 
\begin{align*}
W(J_1(3))=\{3\},\ \ W(J_2(2))=B_{1/2}(2), \\ W(J_2(1))=B_{1/2}(1),\ \ W(J_1(0))=\{0\}.
\end{align*}
It is easy to check that none of the four sets is in the convex hull of the other three. So none of the four components of $J$ is in the matricial range of the other three; by Theorem \ref{theorem: boundary reps and matricial range} every irreducible representation is boundary, i.e. $\oss{J}$ is reduced. 
\end{example}

\begin{example}
(Compare with Example \ref{example: reduced with bigger blocks in the middle}) With the Jordan
operator $J=J_1(3)\oplus J_2(2)\oplus J_2(1/2)\oplus J_1(0)$, the operator system $\oss{J}$ is
\textbf{not} reduced. Indeed, $W(J_2(1/2)$ is the disk of radius $1/2$ centered
at $1/2$, and so $0\in W(J_2(1/2))$. By Theorem \ref{theorem:full classfication for Jordan}, $\pi_4$ is not boundary.
\end{example}



\begin{thebibliography}{10}

\bibitem{argerami--farenick2013a}
M.~Argerami and D.~Farenick.
\newblock The {$C^*$}-envelope of an irreducible periodic weighted unilateral
  shift.
\newblock {\em Int. Eq. and Op. Th.}, in press, 2013.

\bibitem{arveson1969}
W.~Arveson.
\newblock Subalgebras of {$C\sp{\ast} $}-algebras.
\newblock {\em Acta Math.}, 123:141--224, 1969.

\bibitem{arveson1972}
W.~Arveson.
\newblock Subalgebras of {$C\sp{\ast} $}-algebras. {II}.
\newblock {\em Acta Math.}, 128(3-4):271--308, 1972.

\bibitem{arveson2008}
W.~Arveson.
\newblock The noncommutative {C}hoquet boundary.
\newblock {\em J. Amer. Math. Soc.}, 21(4):1065--1084, 2008.

\bibitem{arveson2010}
W.~Arveson.
\newblock The noncommutative {C}hoquet boundary {III}: operator systems in
  matrix algebras.
\newblock {\em Math. Scand.}, 106(2):196--210, 2010.

\bibitem{blecher2007}
D.~P. Blecher.
\newblock Positivity in operator algebras and operator spaces.
\newblock In {\em Positivity}, Trends Math., pages 27--71. Birkh\"auser, Basel,
  2007.

\bibitem{Blecher--LeMerdy-book}
D.~P. Blecher and C.~Le~Merdy.
\newblock {\em Operator algebras and their modules---an operator space
  approach}, volume~30 of {\em London Mathematical Society Monographs. New
  Series}.
\newblock The Clarendon Press Oxford University Press, Oxford, 2004.
\newblock Oxford Science Publications.

\bibitem{farenick2004}
D.~R. Farenick.
\newblock Pure matrix states on operator systems.
\newblock {\em Linear Algebra Appl.}, 393:149--173, 2004.

\bibitem{haagerup-delaharpe1992}
U.~Haagerup and P.~de~la Harpe.
\newblock The numerical radius of a nilpotent operator on a {H}ilbert space.
\newblock {\em Proc. Amer. Math. Soc.}, 115(2):371--379, 1992.

\bibitem{hildebrandt1966}
S.~Hildebrandt.
\newblock \"{U}ber den numerischen {W}ertebereich eines {O}perators.
\newblock {\em Math. Ann.}, 163:230--247, 1966.

\bibitem{Paulsen-book}
V.~Paulsen.
\newblock {\em Completely bounded maps and operator algebras}, volume~78 of
  {\em Cambridge Studies in Advanced Mathematics}.
\newblock Cambridge University Press, Cambridge, 2002.

\end{thebibliography}
\end{document}